\documentclass[a4paper,twoside,11pt]{article}
\usepackage{amsmath,latexsym,amssymb,amsfonts,amsbsy,amsthm,mathrsfs}
\usepackage{algorithm}
\usepackage{algorithmic}
\usepackage{cite}
\usepackage{array}
\usepackage{tikz}
\usepackage{float}
\usepackage{booktabs}
\usepackage{multirow}
\usetikzlibrary{calc}
\usepackage{indentfirst}
\usepackage{makecell}

\usepackage[labelfont=bf, labelsep=period, figurename=Fig.]{caption}

\usepackage{graphicx}
\usepackage{subfigure}
\usepackage[top=1in, bottom=1in, left=1.25in, right=1.25in]{geometry}
\usepackage{epstopdf}
\usepackage{diagbox}
\usepackage{multirow}
\usepackage{enumerate}
\usepackage{color}
\usepackage{hyperref}
\usepackage{cleveref}
\newfont{\bb}{msbm10}

\newtheorem{theorem}{Theorem}[section]

\newtheorem{lemma}{Lemma}[section]

\newtheorem{remark}{Remark}[section]

\theoremstyle{definition}
\newtheorem{example}{Example}[section]

\numberwithin{equation}{section}

\renewcommand{\arraystretch}{1.24}

\title{Randomized average block coordinate descent method with heavy-ball momentum for tensor least squares problem under the t-product}

\author{
Li-Lin Ji\\
School of Mathematical Sciences, Tongji University, \\
Shanghai 200092, China \\
Email: 2251237@tongji.edu.cn,\\
Ni-Hong Ke\\
School of Mathematical Sciences, Tongji University, \\
Shanghai 200092, China \\
Email: kenihong@tongji.edu.cn,\\
Jun-Feng Yin\\
Key Laboratory of Intelligent Computing and
Applications (Tongji University),
\\Ministry of Education,
School of Mathematical Sciences, Tongji University, \\
Shanghai 200092, China \\
Email: yinjf@tongji.edu.cn\\
}

\begin{document}
\cleardoublepage \pagestyle{myheadings}

\maketitle
\date{}

\markboth{\small}
{\small  }

\begin{abstract}
A tensor randomized average block coordinate descent method with heavy-ball momentum is proposed for the tensor least squares problem with respect to the t-product.
Theoretical analysis for tensor block coordinate descent method is established and average techniques are applied to avoid the computation of tensor Moore--Penrose inverse. To further accelerate convergence, a heavy-ball momentum scheme is incorporated into the block coordinate descent method, where the step size and momentum parameters are adaptively determined by a two-dimensional minimal residual projection. Theoretical analyses give the convergence of the new method and provide an improved bound on the linear convergence rate. Numerical experiments further verify the efficiency of the proposed method in terms of the number of iterations and the better video recovery performance.

\end{abstract}

\noindent{\bf Keywords.}\
 Least squares problem, Coordinate descent method, Tensor product, Heavy-ball momentum, Convergence.

\bigskip

\section{Introduction}
\label{sec:intro}

Consider solving the tensor linear system
\begin{equation}\label{eqn:taxb}
	\mathcal{A}*\mathcal{X} = \mathcal{B}, \quad \mathcal{A} \in \mathbb{R}^{n_1 \times n_2 \times n_3}, \quad\mathcal{B} \in \mathbb{R}^{n_1 \times p \times n_3},
\end{equation}
where ``$*$" is the tensor t-product proposed by Kilmer and Martin \cite{2011KM}. Tensor linear systems under the t-product framework widely arise in various scientific computing and artificial intelligence applications, such as computer vision \cite{2013NKBH,2018YGXG}, image processing \cite{2013KKNH, 2013MSL,2026JI}, data completion \cite{2022HLX, 2023YZ}
and machine learning \cite{2009KB}.

In practical applications, noise, incomplete observations and modeling errors often lead to the following tensor least squares problem:
\begin{equation}\label{eqn:tls}
	\min\limits_{\mathcal{X}}\|\mathcal{B}-\mathcal{A}*\mathcal{X}\|_F^2,
\end{equation}
where $\|\mathcal{A}\|_F^2=\sum\limits_{i,j,k}\mathcal{A}_{i,j,k}\mathcal{A}_{i,j,k}$ denotes the Frobenius norm of $\mathcal{A}$.
Needell, Srebro and Ward investigated the relations among the randomized Kaczmarz method, random sampling and stochastic gradient method and presented the convergence properties for tensor linear equations and tensor least squares problems \cite{2016NSW}.
Huang and Zhong proposed a tensor randomized extended Kaczmarz method, as well as the block and greedy variants, and and proved convergence in expectation to the least squares solution $\mathcal{X}_{LS}$ \cite{2024HZ}.
An, Liang, et al. \cite{2025ALJL} further developed a tensor randomized extended average block Kaczmarz method, which employs convex linear combinations of tensors to avoid computing the tensor pseudoinverse.
Tang and Li recently studied statistical properties of random sampling methods for tensor least squares problems, including leverage-score subsampling approaches \cite{2025TL}.

Recently, randomized coordinate descent methods for linear least squares have recently attracted considerable attention in the numerical linear algebra and optimization communities.
Leventhal and Lewis~\cite{2010LL} introduced a randomized coordinate descent method that selects a coordinate direction at random and established a convergence rate characterized by the scaled condition number. 
Generalizations were subsequently studied by Nesterov~\cite{2012N}, while Beck and Tetruashvili~\cite{2013BT} developed a unified convergence theory for randomized coordinate descent method. 
More research on randomized block coordinate descent methods was then studied by Richt\'{a}rik and Tak\'{a}\v{c}~\cite{2014RT}, Lu and Xiao~\cite{2015LX} and Necoara et al.~\cite{2017NNG}.

For the tensor least squares problem~\eqref{eqn:tls}, a randomized coordinate descent method was proposed in~\cite{2026KJ}, where one lateral slice of $\mathcal{A}$ is randomly selected per iteration and an upper bound of the convergence rate is established. Building on this foundation, we further propose and analyze the tensor randomized block coordinate descent method and its average block variants.
Moreover, a tensor randomized average block coordinate descent method with heavy-ball momentum is proposed for the tensor least squares problem where both the step size and the momentum parameters are adaptively determined by the minimal residual projection principle. Theoretical analysis proves that the new method
converges to the least squares solution $\mathcal{X}_{LS}$ at a linear convergence rate. 
Numerical experiments further verify that the proposed method is efficient, and better than the existing tensor solvers in terms of the number of iteration steps and computational time.

The rest of the paper is organized as follows. Preliminaries of notations are presented in Section \ref{sec:pre}. In Section \ref{sec:trabcd}, the tensor randomized average block coordinate descent method with heavy-ball momentum is proposed and the convergence theory is established.
The numerical experiments are given in Section \ref{sec:num} to demonstrate the effectiveness of the proposed method in solving tensor least squares problem. Finally we conclude the paper with a brief summary in Section \ref{sec:con}.

\section{Tensor randomized average block coordinate descent method}\label{sec:pre}

In this section, we first present the tensor randomized average block coordinate descent method, which will serve as the baseline scheme for the heavy-ball momentum acceleration developed later.


The tensor randomized coordinate descent method was proposed in~\cite{2026KJ} as an effective randomized solver for tensor least squares problems. Based on the decomposition
\[
\|\mathcal{B}-\mathcal{A}*\mathcal{X}\|_F^2
=
\left\|
\mathcal{B}
-
\sum_{j=1}^{n_2}
\mathcal{A}_{:,j,:}*\mathcal{X}_{j,:,:}
\right\|_F^2,
\]
this method randomly selects a lateral slice $\mathcal{A}_{:,j_k,:}$ at each iteration and updates the corresponding horizontal slice $\mathcal{X}_{j_k,:,:}^{(k)}$ by minimizing the objective function below:
\begin{equation*}
	\begin{aligned}
		\mathcal{X}^{(k+1)}_{j_k, :, :}
		&= \mathop{\arg} \mathop{\min}_{\mathcal{X} \in \mathbb{R}^{1 \times p \times n_3}} \left\| \mathcal{B} - \sum_{j \in [n_2] \setminus \{j_k\}} \mathcal{A}_{:,j,:} * \mathcal{X}^{(k)}_{j, :, :} - \mathcal{A}_{:,j_k,:} * \mathcal{X} \right\|_F^2 \\
		&= \mathop{\arg} \mathop{\min}_{\mathcal{X} \in \mathbb{R}^{1 \times p \times n_3}} \left\| \mathcal{R}^{(k)} - \mathcal{A}_{:,j_k,:} * \left(\mathcal{X} - \mathcal{X}^{(k)}_{j_k, :, :}\right) \right\|_F^2.
	\end{aligned}
\end{equation*}
So, the update is given by
\begin{equation}\label{eqn:rcd}
\mathcal{X}_{j_k,:,:}^{(k+1)}
=\mathcal{X}_{j_k,:,:}^{(k)}
+
(\mathcal{A}_{:,j_k,:}^{T}*\mathcal{A}_{:,j_k,:})^{\dagger}\mathcal{A}_{:,j_k,:}^{T}\mathcal{R}^{(k)}=
\mathcal{X}_{j_k,:,:}^{(k)}
+
\mathcal{A}_{:,j_k,:}^{\dagger}*\mathcal{R}^{(k)},
\end{equation}
and the residual is updated as
\[
\mathcal{R}^{(k+1)}
=
\mathcal{R}^{(k)}
-
\mathcal{A}_{:,j_k,:}*
\left(
\mathcal{X}_{j_k,:,:}^{(k+1)}
-
\mathcal{X}_{j_k,:,:}^{(k)}
\right).
\]

To improve the efficiency of the tensor randomized coordinate descent method, a natural extension is to employ blockwise updates. In the development of matrix block iteration methods, block selection strategies have gradually evolved from relatively simple randomized rules to more adaptive greedy mechanisms. Early studies focused on randomized average block strategies, in which the working block is selected according to prescribed random sampling rules \cite{2014NT,2015NZZ}. To better leverage the residual information at each iteration, randomized greedy block strategies were subsequently developed, enabling more adaptive block selection and accelerating practical convergence \cite{2020NZ,2022MW,2023XYZ}.

In this paper, we adopt a randomized average blocking strategy. Specifically, let $\mathcal{T}=\{\tau_j\}_{j=1}^q$ be a partition of $[n_2]$ into $q$ disjoint blocks, satisfying
\[
\tau_i\cap\tau_j=\emptyset,\quad i\neq j,
\qquad \text{and} \qquad
\bigcup_{j=1}^q \tau_j=[n_2].
\] At the $(k+1)$-th iteration, one block $\mathcal{A}_{:,\tau_{j_k},:}$ is randomly selected, and only the corresponding horizontal slices $\mathcal{X}^{(k)}_{\tau_{j_k},:,:}$ of the current iterate $\mathcal{X}^{(k)}$ are updated as follows:
\begin{equation}\label{eqn:tRBCD-update}
	\mathcal{X}_{\tau_{j_k},:,:}^{(k+1)}
	=\mathcal{X}_{\tau_{j_k},:,:}^{(k)}
	+
	(\mathcal{A}_{:,\tau_{j_k},:}^{T}*\mathcal{A}_{:,\tau_{j_k},:})^{\dagger}*\mathcal{A}_{:,\tau_{j_k},:}^{T}*\mathcal{R}^{(k)}=
	\mathcal{X}_{\tau_{j_k},:,:}^{(k)}
	+
	\mathcal{A}_{:,\tau_{j_k},:}^{\dagger}*\mathcal{R}^{(k)}.
\end{equation}
This update rule gives rise to the tensor randomized block coordinate descent (tRBCD) method. The details of this method are summarized in Algorithm \ref{alg:tRBCD}, while its convergence analysis is provided in Theorem \ref{thm:TRBCD}.

\begin{algorithm}[!htbp]
	\caption{The tensor randomized block coordinate descent (tRBCD) method}
	\begin{algorithmic}[1]
		\REQUIRE $\mathcal{X}^{(0)} \in \mathbb{R}^{n_2 \times p \times n_3}$,
		$\mathcal{A} \in \mathbb{R}^{n_1 \times n_2 \times n_3}$,
		$\mathcal{B} \in \mathbb{R}^{n_1 \times p \times n_3}$,
		a block partition of $[n_2]$: $\mathcal{T}=\{\tau_j\}_{j=1}^q$.
		\STATE $\mathcal{R}^{(0)} = \mathcal{B}-\mathcal{A}*\mathcal{X}^{(0)}$.
		\FOR{$k = 0, 1, 2, \ldots$}
		\STATE Select $\tau_{j_k} \in \mathcal{T}$ randomly with probability
		$\|\mathcal{A}_{:,\tau_{j_k},:}\|_F^2/\|\mathcal{A}\|_F^2$.
		\STATE
		$
		\mathcal{X}^{(k+1)}_{\tau_{j_k},:,:} = \mathcal{X}^{(k)}_{\tau_{j_k},:,:}
		+(\mathcal{A}_{:,\tau_{j_k},:})^{\dagger}*\mathcal{R}^{(k)}.
		$
		\STATE $\mathcal{R}^{(k+1)}=\mathcal{R}^{(k)}-\mathcal{A}_{:,\tau_{j_k},:}
		*(\mathcal{X}^{(k+1)}_{\tau_{j_k},:,:}-\mathcal{X}^{(k)}_{\tau_{j_k},:,:})$.
		\ENDFOR
	\end{algorithmic}\label{alg:tRBCD}
\end{algorithm}

\begin{theorem}\label{thm:TRBCD}
	The iterative sequence $\{\mathcal{X}^{(k)}\}_{t=0}^{\infty}$ generated by the tensor randomized block coordinate descent method satisfies
	\begin{equation}\label{eqn:tRBCD}
		\mathbb{E}\|\mathcal{E}^{(k+1)}\|_F^2\le \left(1-\min\limits_{\tau_j\in \mathcal{T}}\sigma_{\min }\left(\mathbb{E}\left[\operatorname{bcirc}(\mathcal{A}_{:,\tau_{j},:}*(\mathcal{A}_{:,\tau_{j},:})^{\dagger})\right]\right)\right)^{k+1} \|\mathcal{E}^{0}\|_F^2,
	\end{equation}
	where $
	\mathcal{E}^{(k)}
	:=
	\mathcal{A} * \left(\mathcal{X}_{LS}-\mathcal{X}^{(k)}\right),
	$ and $\mathcal{X}_{LS}$ is the least squares solution of system \eqref{eqn:taxb}.
\end{theorem}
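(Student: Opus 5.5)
The plan is to show that one tRBCD step acts on $\mathcal{E}^{(k)}$ as an orthogonal projection, and then to take expectations. Write $\mathcal{A}_j:=\mathcal{A}_{:,\tau_j,:}$ and $\mathcal{P}_j:=\mathcal{A}_j*\mathcal{A}_j^{\dagger}$. Let $\mathcal{R}_{LS}:=\mathcal{B}-\mathcal{A}*\mathcal{X}_{LS}$. Then $\mathcal{R}^{(k)}=\mathcal{R}_{LS}+\mathcal{E}^{(k)}$, and the normal equations give $\mathcal{A}^T*\mathcal{R}_{LS}=0$. In particular $\mathcal{A}_j^T*\mathcal{R}_{LS}=0$ for every block $j$. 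Since $\mathcal{A}_j^{\dagger}=(\mathcal{A}_j^T*\mathcal{A}_j)^{\dagger}*\mathcal{A}_j^T$, this yields $\mathcal{A}_j^{\dagger}*\mathcal{R}^{(k)}=\mathcal{A}_j^{\dagger}*\mathcal{E}^{(k)}$. Only the block $\tau_{j_k}$ of $\mathcal{X}$ changes, so the update gives
\[
\mathcal{E}^{(k+1)}=\mathcal{E}^{(k)}-\mathcal{A}_{j_k}*\mathcal{A}_{j_k}^{\dagger}*\mathcal{E}^{(k)}=(\mathcal{I}-\mathcal{P}_{j_k})*\mathcal{E}^{(k)}.
\]

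Next I pass to the block-circulant representation. Under $\operatorname{bcirc}$ and unfolding, the t-product becomes an ordinary matrix product, and $\|\mathcal{Y}\|_F^2=\|\operatorname{unfold}(\mathcal{Y})\|_F^2$. Moreover $\operatorname{bcirc}(\mathcal{A}_j^{\dagger})=\operatorname{bcirc}(\mathcal{A}_j)^{\dagger}$; this is the standard property obtained by block-diagonalizing with the DFT. Hence $P_j:=\operatorname{bcirc}(\mathcal{P}_j)$ is a symmetric idempotent matrix, i.e. an orthogonal projector. Writing $e_k=\operatorname{unfold}(\mathcal{E}^{(k)})$ (a matrix with $p$ columns), the Pythagorean identity gives
\[
\|e_{k+1}\|_F^2=\|e_k\|_F^2-e_k^{T}P_{j_k}e_k
\]
in trace form. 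Taking the conditional expectation over $j_k$ with the probabilities $\|\mathcal{A}_j\|_F^2/\|\mathcal{A}\|_F^2$ gives
\[
\mathbb{E}_k\|e_{k+1}\|_F^2=\|e_k\|_F^2-\operatorname{tr}\bigl(e_k^T\,\mathbb{E}[P_j]\,e_k\bigr).
\]

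The main obstacle is bounding $\operatorname{tr}(e_k^T\mathbb{E}[P_j]e_k)\ge\sigma\|e_k\|_F^2$, where $\sigma$ is the quantity appearing in \eqref{eqn:tRBCD}. The difficulty is that $\mathbb{E}[P_j]$ is singular whenever $\mathcal{A}$ is rank deficient. I would resolve this by noting that each column of $e_k$ lies in $\operatorname{range}(\operatorname{bcirc}(\mathcal{A}))$, which equals the sum of the ranges of the $\operatorname{bcirc}(\mathcal{A}_j)$. On that subspace the PSD matrix $\mathbb{E}[P_j]$ is positive definite, because its null space is $\bigcap_j\ker P_j$, the orthogonal complement of the range. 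So I interpret $\sigma_{\min}$ in the statement as the smallest nonzero singular value, restricted to this range; I would state this interpretation explicitly in the proof. For symmetric PSD $\mathbb{E}[P_j]$ and columns in that range, the Rayleigh quotient then gives the bound with $\sigma=\sigma_{\min}(\mathbb{E}[\operatorname{bcirc}(\mathcal{P}_j)])$. The ``$\min_{\tau_j}$'' in the statement is redundant after the expectation and only weakens nothing, so the bound holds as written.

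Finally, this yields $\mathbb{E}_k\|\mathcal{E}^{(k+1)}\|_F^2\le(1-\sigma)\|\mathcal{E}^{(k)}\|_F^2$. I then take full expectation via the tower property and unroll the recursion $k+1$ times, which gives \eqref{eqn:tRBCD}. I would also note that $\sigma\le 1$, since $\mathbb{E}[P_j]$ is a convex combination of projectors, so $1-\sigma\in[0,1)$ and the bound is a genuine contraction.
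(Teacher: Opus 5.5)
The paper states Theorem~\ref{thm:TRBCD} without a proof, so there is no argument of its own to compare against. Your proof is correct, and it is the exact-projection analogue of the paper's proof of Theorem~\ref{thm:TRABCDA}. The skeleton is the same:
\begin{enumerate}
\item the normal equations let you replace $\mathcal{R}^{(k)}$ by $\mathcal{E}^{(k)}$ in the update;
\item you obtain a one-step decrease relation;
\item you take the conditional expectation with weights $\|\mathcal{A}_{:,\tau_j,:}\|_F^2/\|\mathcal{A}\|_F^2$;
\item you bound a Rayleigh quotient using $\mathcal{E}^{(k)}\in\operatorname{range}_p(\mathcal{A})$;
\item you unroll the recursion.
\end{enumerate}
Because the tRBCD step is an orthogonal projection, your relation $\|e_{k+1}\|_F^2=\|e_k\|_F^2-\operatorname{tr}(e_k^TP_{j_k}e_k)$ is an equality. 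You therefore need neither $\Gamma_1$ nor the full-column-rank hypothesis used in Theorem~\ref{thm:TRABCDA}.

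Your reinterpretation of $\sigma_{\min}$ is more than a convenience, and your justification for it should be sharpened. The matrix $\mathbb{E}[P_j]$ is $n_1n_3\times n_1n_3$, but its range is $\operatorname{range}(\operatorname{bcirc}(\mathcal{A}))$, of dimension at most $n_2n_3$. So it is singular whenever $\operatorname{bcirc}(\mathcal{A})$ lacks full row rank. In particular it is always singular when $n_1>n_2$, even if $\operatorname{bcirc}(\mathcal{A})$ has full column rank; it is not only the rank-deficient case, as you wrote.

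Read literally, the theorem then asserts only the trivial bound $\mathbb{E}\|\mathcal{E}^{(k+1)}\|_F^2\le\|\mathcal{E}^{(0)}\|_F^2$. Your version uses the smallest nonzero singular value $\sigma_{\min}^{+}$, i.e.\ the smallest eigenvalue of $\mathbb{E}[P_j]$ restricted to $\operatorname{range}(\operatorname{bcirc}(\mathcal{A}))$. That is the meaningful statement. Since $0\le 1-\sigma_{\min}^{+}\le 1-\sigma_{\min}\le 1$, it implies the literal one, so the theorem as written is covered under either reading.

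In the write-up, also handle two small points:
\begin{itemize}
\item the edge case $\mathcal{A}=\mathcal{O}$, where there is no nonzero singular value and the claim is trivial;
\item a block with $\|\mathcal{A}_{:,\tau_j,:}\|_F=0$ has $P_j=0$, so zero sampling probabilities do not disturb your identification of the null space of $\mathbb{E}[P_j]$ with $\bigcap_j\ker P_j$.
\end{itemize}
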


It is worth noting that the update~\eqref{eqn:tRBCD-update} requires computing the Moore--Penrose inverse $\bigl(\mathcal{A}_{:,\tau_{j_k},:}\bigr)^\dagger$ at each iteration, which may become the dominant computational cost when the block size is large.

To reduce the per-iteration complexity, we adopt the average projection technique of \cite{2019N, 2022BL}, replacing the pseudoinverse step in Eq.~\eqref{eqn:tRBCD-update} with a gradient-based approximation. This leads to the following iterative scheme for the tensor randomized average block coordinate descent method:
\[
\mathcal{X}^{(k+1)}_{\tau_{j_k},:,:} =
\mathcal{X}^{(k)}_{\tau_{j_k},:,:}
+\alpha_k
\frac{(\mathcal{A}_{:,\tau_{j_k},:})^T*\mathcal{R}^{(k)}}
{\|\mathcal{A}_{:, \tau_{j_k},:}\|_F^2},
\]
where the step size $\alpha_k$ is chosen adaptively at each iteration.

Specifically, the adaptive step size $\alpha_k$ is determined by minimizing the residual function:
\[
\begin{aligned}
	\mathcal{G}(\alpha)
	&=
	\left\|\mathcal{R}^{(k)}-
	\alpha
	\mathcal{A}_{:,\tau_{j_k},:}*
	\frac{(\mathcal{A}_{:,\tau_{j_k},:})^T*\mathcal{R}^{(k)}}
	{\|\mathcal{A}_{:,\tau_{j_k},:}\|_F^2}
	\right\|_F^2 .
\end{aligned}
\]
By taking the derivative of $\mathcal{G}(\alpha)$ with respect to $\alpha$, it yields
\[
\frac{\mathrm{d}\mathcal{G}}{\mathrm{d}\alpha}
=
2\alpha
\left\|
\mathcal{A}_{:,\tau_{j_k},:}*
\frac{(\mathcal{A}_{:,\tau_{j_k},:})^T*\mathcal{R}^{(k)}}
{\|\mathcal{A}_{:,\tau_{j_k},:}\|_F^2}
\right\|_F^2
-
2
\left\langle
\mathcal{R}^{(k)},
\mathcal{A}_{:,\tau_{j_k},:}*
\frac{(\mathcal{A}_{:,\tau_{j_k},:})^T*\mathcal{R}^{(k)}}
{\|\mathcal{A}_{:,\tau_{j_k},:}\|_F^2}
\right\rangle .
\]
Since $\mathcal{G}(\alpha)$ is a strictly convex quadratic function of $\alpha$, the unique minimizer is given by
\begin{equation}\label{eqn:n-alpha}
	\alpha_k=
	\frac{
		\|\mathcal{A}_{:, \tau_{j_k},:}\|_F^2
		\|
		(\mathcal{A}_{:, \tau_{j_k},:})^T * \mathcal{R}^{(k)}
		\|_F^2
	}{
		\|
		\mathcal{A}_{:, \tau_{j_k},:}*
		(\mathcal{A}_{:, \tau_{j_k},:})^T * \mathcal{R}^{(k)}
		\|_F^2
	}.
\end{equation}
And the resulting tensor randomized average block coordinate descent method is summarized in Algorithm~\ref{alg:tRABCDa}.

\begin{algorithm}[!htbp]
	\caption{The tensor randomized average block coordinate descent (tRABCD) method}
	\begin{algorithmic}[1]
		\REQUIRE $\mathcal{X}^{(0)} \in \mathbb{R}^{n_2 \times p \times n_3},\,
		\mathcal{A} \in \mathbb{R}^{n_1 \times n_2 \times n_3},\,
		\mathcal{B} \in \mathbb{R}^{n_1 \times p \times n_3}$,
		a block partition of $[n_2]$: $\mathcal{T}=\{\tau_j\}_{j=1}^q$.
		\ENSURE Iterate $\mathcal{X}^{(k+1)}$.
		\STATE $\mathcal{R}^{(0)} = \mathcal{B}-\mathcal{A}*\mathcal{X}^{(0)}$.
		\FOR{$k=0,1,2,\ldots$}
		\STATE Sample $\tau_{j_k}\in\mathcal{T}$ with probability
		$\|\mathcal{A}_{:,\tau_{j_k},:}\|_F^2/\|\mathcal{A}\|_F^2$.
		\STATE Update
		$
		\mathcal{X}^{(k+1)}_{\tau_{j_k},:,:}
		=
		\mathcal{X}^{(k)}_{\tau_{j_k},:,:}
		+
				\frac{
			\|
			(\mathcal{A}_{:, \tau_{j_k},:})^T * \mathcal{R}^{(k)}
			\|_F^2
		}{
			\|
			\mathcal{A}_{:, \tau_{j_k},:}*
			(\mathcal{A}_{:, \tau_{j_k},:})^T * \mathcal{R}^{(k)}
			\|_F^2
		}
(\mathcal{A}_{:,\tau_{j_k},:})^T*\mathcal{R}^{(k)}.
		$
		\STATE $\mathcal{R}^{(k+1)}=\mathcal{R}^{(k)}-\mathcal{A}_{:,\tau_{j_k},:}*(\mathcal{X}^{(k+1)}_{\tau_{j_k},:,:}-\mathcal{X}^{(k)}_{\tau_{j_k},:,:})$.
		\ENDFOR
	\end{algorithmic}
	\label{alg:tRABCDa}
\end{algorithm}

\begin{remark}
    The computational cost per iteration of the tensor randomized average block coordinate descent method is dominated by evaluating
	\[
	(\mathcal{A}_{:,\tau_{j_k},:})^T*\mathcal{R}^{(k)}
	\quad\text{and}\quad
	\mathcal{A}_{:,\tau_{j_k},:}*\bigl((\mathcal{A}_{:,\tau_{j_k},:})^T*\mathcal{R}^{(k)}\bigr).
	\]
	Unlike the tensor randomized coordinate descent method in \cite{2026KJ}, the proposed update avoids computing the Moore--Penrose pseudoinverse $\bigl(\mathcal{A}_{:,\tau_{j_k},:}\bigr)^\dagger$, and is thus cheaper per iteration. When $\operatorname{fft}(\mathcal{A},[],3)$ is precomputed and stored, the dominant work per iteration reduces to a few matrix multiplications in the Fourier domain and a single  inverse FFT. Consequently, the computational saving sare particularly significant for relatively large block sizes $|\tau_{j_k}|$ and large $n_3$.
\end{remark}

The convergence theory of the tensor randomized average block coordinate descent method is stated in Theorem~\ref{thm:TRABCDA}.

\begin{theorem}\label{thm:TRABCDA}
	Let $
	\Gamma_1
	=
	\max_{\tau_j\in\mathcal{T}}
	\frac{
		\|\operatorname{bcirc}(\mathcal{A}_{:,\tau_j,:})\|_2^2
	}{
		\|\mathcal{A}_{:,\tau_j,:}\|_F^2
	},
	$
	and assume that $\operatorname{bcirc}(\mathcal{A})$ is of full column rank.
	Then the sequence $\{\mathcal{X}^{(k)}\}_{k=0}^{\infty}$ generated by
	the tensor randomized average block coordinate descent method satisfies
	\begin{equation}\label{eqn:4.14}
		\mathbb{E}\|\mathcal{E}^{(k+1)}\|_F^2
		\le
		\left(
		1-
		\frac{
			\sigma_{\min}^2(\operatorname{bcirc}(\mathcal{A}))
		}{
			\Gamma_1\|\mathcal{A}\|_F^2
		}
		\right)^{k+1}
		\|\mathcal{E}^{(0)}\|_F^2.
	\end{equation}
\end{theorem}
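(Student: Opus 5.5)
Write $\mathcal{A}_k:=\mathcal{A}_{:,\tau_{j_k},:}$ and $\mathcal{G}_k:=\mathcal{A}_k^T*\mathcal{R}^{(k)}$. The plan is to reduce everything to a one-step contraction in the residual space. The argument then uses only the optimality of the step size $\alpha_k$ and the unfolding identity $\operatorname{unfold}(\mathcal{A}*\mathcal{X})=\operatorname{bcirc}(\mathcal{A})\operatorname{unfold}(\mathcal{X})$.

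First, note that $\mathcal{R}^{(k)}=\mathcal{B}-\mathcal{A}*\mathcal{X}^{(k)}=\mathcal{R}_{LS}+\mathcal{E}^{(k)}$, where $\mathcal{R}_{LS}=\mathcal{B}-\mathcal{A}*\mathcal{X}_{LS}$. The normal equations give $\mathcal{A}^T*\mathcal{R}_{LS}=0$, hence $\mathcal{A}_k^T*\mathcal{R}_{LS}=0$ for every block. Consequently $\mathcal{G}_k=\mathcal{A}_k^T*\mathcal{E}^{(k)}$. Since $\mathcal{E}^{(k+1)}-\mathcal{E}^{(k)}=\mathcal{R}^{(k+1)}-\mathcal{R}^{(k)}=-\alpha_k\mathcal{A}_k*\mathcal{G}_k/\|\mathcal{A}_k\|_F^2$, the update reads
\[
\mathcal{E}^{(k+1)}=\mathcal{E}^{(k)}-\beta_k\,\mathcal{A}_k*\mathcal{G}_k,
\qquad
\beta_k=\frac{\|\mathcal{G}_k\|_F^2}{\|\mathcal{A}_k*\mathcal{G}_k\|_F^2}.
\]
Because $\langle\mathcal{E}^{(k)},\mathcal{A}_k*\mathcal{G}_k\rangle=\|\mathcal{G}_k\|_F^2$, expanding the square gives exactly
\[
\|\mathcal{E}^{(k+1)}\|_F^2=\|\mathcal{E}^{(k)}\|_F^2-\frac{\|\mathcal{G}_k\|_F^4}{\|\mathcal{A}_k*\mathcal{G}_k\|_F^2}.
\]
This is the minimal-residual step; it is equivalent to minimizing $\mathcal{G}(\alpha)$, because $\mathcal{R}_{LS}$ is orthogonal to the range of $\mathcal{A}$.

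Next, bound the denominator:
\[
\|\mathcal{A}_k*\mathcal{G}_k\|_F^2\le\|\operatorname{bcirc}(\mathcal{A}_k)\|_2^2\|\mathcal{G}_k\|_F^2\le\Gamma_1\|\mathcal{A}_k\|_F^2\|\mathcal{G}_k\|_F^2 .
\]
This yields
\[
\|\mathcal{E}^{(k+1)}\|_F^2\le\|\mathcal{E}^{(k)}\|_F^2-\frac{\|\mathcal{A}_k^T*\mathcal{E}^{(k)}\|_F^2}{\Gamma_1\|\mathcal{A}_k\|_F^2}.
\]
Now take the conditional expectation over $\tau_{j_k}$, which is chosen with probability $\|\mathcal{A}_{:,\tau_j,:}\|_F^2/\|\mathcal{A}\|_F^2$. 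The norms $\|\mathcal{A}_k\|_F^2$ cancel, and since the blocks partition $[n_2]$,
\[
\sum_j\|\mathcal{A}_{:,\tau_j,:}^T*\mathcal{E}^{(k)}\|_F^2=\|\mathcal{A}^T*\mathcal{E}^{(k)}\|_F^2 .
\]
Hence
\[
\mathbb{E}_k\|\mathcal{E}^{(k+1)}\|_F^2\le\|\mathcal{E}^{(k)}\|_F^2-\frac{\|\mathcal{A}^T*\mathcal{E}^{(k)}\|_F^2}{\Gamma_1\|\mathcal{A}\|_F^2}.
\]

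The remaining step is the lower bound
\[
\|\mathcal{A}^T*\mathcal{E}^{(k)}\|_F^2\ge\sigma_{\min}^2(\operatorname{bcirc}(\mathcal{A}))\|\mathcal{E}^{(k)}\|_F^2 .
\]
Unfolding gives $\operatorname{unfold}(\mathcal{E}^{(k)})=\operatorname{bcirc}(\mathcal{A})\,\operatorname{unfold}(\mathcal{X}_{LS}-\mathcal{X}^{(k)})$, so every column of this unfolding lies in $\operatorname{range}(\operatorname{bcirc}(\mathcal{A}))$. Also $\operatorname{bcirc}(\mathcal{A}^T)=\operatorname{bcirc}(\mathcal{A})^T$. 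On the range of a matrix $M$, one has $\|M^Tv\|_2\ge\sigma_{\min}^{+}(M)\|v\|_2$. Under the full column rank assumption, the smallest nonzero singular value $\sigma_{\min}^{+}$ equals $\sigma_{\min}(\operatorname{bcirc}(\mathcal{A}))$. This gives the desired lower bound, and applying the tower property and iterating $k+1$ times yields \eqref{eqn:4.14}.

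The main obstacle I expect is this last step: carefully justifying that the column-wise range argument is compatible with the t-product (transpose and unfold conventions). It is also the only place where the full column rank assumption is used. Without that assumption, one would work with the smallest nonzero singular value instead.
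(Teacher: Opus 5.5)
Your proposal is correct and follows essentially the same route as the paper. Both arguments rewrite the block gradient as $\mathcal{A}_k^T*\mathcal{E}^{(k)}$ via the normal equations, derive the exact decrease $\|\mathcal{G}_k\|_F^4/\|\mathcal{A}_k*\mathcal{G}_k\|_F^2$, bound the denominator by $\Gamma_1\|\mathcal{A}_k\|_F^2\|\mathcal{G}_k\|_F^2$, average over the blocks, and finish with the range lower bound (which you justify more carefully than the paper, via unfolding and $\operatorname{bcirc}(\mathcal{A}^T)=\operatorname{bcirc}(\mathcal{A})^T$). The only detail the paper adds is the degenerate case $\mathcal{A}_k*\mathcal{G}_k=\mathcal{O}$, where your step size is $0/0$; there $\|\mathcal{G}_k\|_F^2=\langle\mathcal{E}^{(k)},\mathcal{A}_k*\mathcal{G}_k\rangle=0$, so no update occurs and the one-step inequality holds trivially.
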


\begin{proof}
	Since $\mathcal{X}_{LS}$ is the least squares solution of the tensor system
	\eqref{eqn:taxb}, then
	\[
	\mathcal{A}^T *
	\left(
	\mathcal{B}-\mathcal{A}*\mathcal{X}_{LS}
	\right)
	=
	\mathcal{O}.
	\]
	Consequently, for any block $\tau_j\in\mathcal T$, 
	\[
	(\mathcal{A}_{:,\tau_j,:})^T *
	\left(
	\mathcal{B}-\mathcal{A}*\mathcal{X}_{LS}
	\right)
	=
	\mathcal{O}.
	\]
	Since $
	\mathcal{R}^{(k)}
	=
	\mathcal{B}-\mathcal{A}*\mathcal{X}^{(k)},
	$
	it follows that
	\begin{equation}\label{eqn:block-residual-E}
		(\mathcal{A}_{:,\tau_j,:})^T * \mathcal{R}^{(k)}
		=
		(\mathcal{A}_{:,\tau_j,:})^T * \mathcal{E}^{(k)} .
	\end{equation}
	
	Based on the step 4 of Algorithm~\ref{alg:tRABCDa}, the iterate is given by
	\[
	\mathcal{X}^{(k+1)}
	=
	\mathcal{X}^{(k)}
	+
	\alpha_k
	\frac{
		(\mathcal{A}_{:,\tau_{j_k},:})^T * \mathcal{R}^{(k)}
	}{
		\|\mathcal{A}_{:,\tau_{j_k},:}\|_F^2
	},
	\]
	where the update is applied only to the selected block $\tau_{j_k}$.
	Combining this with  Eq.~\eqref{eqn:block-residual-E}, the corresponding error-residual is 
	updated as
	\begin{equation}\label{eqn:tRABCD-E-update}
		\mathcal{E}^{(k+1)}
		=
		\mathcal{E}^{(k)}
		-
		\alpha_k
		\frac{
			\mathcal{A}_{:,\tau_{j_k},:}
			*
			(\mathcal{A}_{:,\tau_{j_k},:})^T
		}{
			\|\mathcal{A}_{:,\tau_{j_k},:}\|_F^2
		}
		*
		\mathcal{E}^{(k)} .
	\end{equation}
	
Denote
\[
\mathcal{U}_k
:=
\mathcal{A}_{:,\tau_{j_k},:}
*
(\mathcal{A}_{:,\tau_{j_k},:})^T
*
\mathcal{E}^{(k)},
\]
then Eq.~\eqref{eqn:tRABCD-E-update} is optimized  as
\[
\mathcal{E}^{(k+1)}
=
\mathcal{E}^{(k)}
-
\frac{\alpha_k}
{\|\mathcal{A}_{:,\tau_{j_k},:}\|_F^2}
\mathcal{U}_k .
\]
By taking the squared Frobenius norm on both sides, it yields
\[
\begin{aligned}
	\|\mathcal{E}^{(k+1)}\|_F^2
	&=
	\|\mathcal{E}^{(k)}\|_F^2
	+
	\frac{\alpha_k^2}
	{\|\mathcal{A}_{:,\tau_{j_k},:}\|_F^4}
	\|\mathcal{U}_k\|_F^2
	-
	\frac{2\alpha_k}
	{\|\mathcal{A}_{:,\tau_{j_k},:}\|_F^2}
	\left\langle
	\mathcal{U}_k,\mathcal{E}^{(k)}
	\right\rangle .
\end{aligned}
\]
Moreover, by the adjoint property of the $t$-product,
\[
\begin{aligned}
	\left\langle
	\mathcal{U}_k,\mathcal{E}^{(k)}
	\right\rangle
	&=
	\left\langle
	\mathcal{A}_{:,\tau_{j_k},:}
	*
	(\mathcal{A}_{:,\tau_{j_k},:})^T
	*
	\mathcal{E}^{(k)},
	\mathcal{E}^{(k)}
	\right\rangle                                      \\
	&=
	\left\langle
	(\mathcal{A}_{:,\tau_{j_k},:})^T
	*
	\mathcal{E}^{(k)},
	(\mathcal{A}_{:,\tau_{j_k},:})^T
	*
	\mathcal{E}^{(k)}
	\right\rangle                                      \\
	&=
	\left\|
	(\mathcal{A}_{:,\tau_{j_k},:})^T
	*
	\mathcal{E}^{(k)}
	\right\|_F^2 .
\end{aligned}
\]
Hence,
\[
	\|\mathcal{E}^{(k+1)}\|_F^2
	=
	\|\mathcal{E}^{(k)}\|_F^2
	+
	\frac{\alpha_k^2}
	{\|\mathcal{A}_{:,\tau_{j_k},:}\|_F^4}
	\|\mathcal{U}_k\|_F^2
	-
	\frac{2\alpha_k}
	{\|\mathcal{A}_{:,\tau_{j_k},:}\|_F^2}
	\left\|
	(\mathcal{A}_{:,\tau_{j_k},:})^T
	*
	\mathcal{E}^{(k)}
	\right\|_F^2 .
\]

From the choice of $\alpha_k$ in Eq.~\eqref{eqn:n-alpha}, when
$\mathcal{U}_k\neq\mathcal{O}$, it is obvious that
\[
\alpha_k
=
\frac{
	\|\mathcal{A}_{:,\tau_{j_k},:}\|_F^2
	\left\|
	(\mathcal{A}_{:,\tau_{j_k},:})^T
	*
	\mathcal{E}^{(k)}
	\right\|_F^2
}{
	\|\mathcal{U}_k\|_F^2
}.
\]
By substituting this expression into the preceding equality, it gives
\begin{equation}\label{eqn:tRABCD-decrease-E}
	\|\mathcal{E}^{(k+1)}\|_F^2
	=
	\|\mathcal{E}^{(k)}\|_F^2
	-
	\frac{
		\left\|
		(\mathcal{A}_{:,\tau_{j_k},:})^T
		*
		\mathcal{E}^{(k)}
		\right\|_F^4
	}{
		\|\mathcal{U}_k\|_F^2
	}.
\end{equation}
If $\mathcal{U}_k=\mathcal{O}$, then
\[
\left\|
(\mathcal{A}_{:,\tau_{j_k},:})^T
*
\mathcal{E}^{(k)}
\right\|_F^2
=
\left\langle
\mathcal{U}_k,\mathcal{E}^{(k)}
\right\rangle
=
0.
\]
In this case, $\mathcal{X}^{(k+1)} = \mathcal{X}^{(k)}$ and $\mathcal{E}^{(k+1)} = \mathcal{E}^{(k)}$, so Eq.~\eqref{eqn:tRABCD-decrease-E} holds trivially with both sides equal.

From the definition of the $t$-product, it yields
\[
	\|\mathcal{U}_k\|_F
	=
	\left\|
	\mathcal{A}_{:,\tau_{j_k},:}
	*
	(\mathcal{A}_{:,\tau_{j_k},:})^T
	*
	\mathcal{E}^{(k)}
	\right\|_F             \le
	\|\operatorname{bcirc}(\mathcal{A}_{:,\tau_{j_k},:})\|_2
	\left\|
	(\mathcal{A}_{:,\tau_{j_k},:})^T
	*
	\mathcal{E}^{(k)}
	\right\|_F .
\]
By combining this bound with Eq.~\eqref{eqn:tRABCD-decrease-E}, it is obtained that
\[
\|\mathcal{E}^{(k+1)}\|_F^2
\le
\|\mathcal{E}^{(k)}\|_F^2
-
\frac{
	\left\|
	(\mathcal{A}_{:,\tau_{j_k},:})^T
	*
	\mathcal{E}^{(k)}
	\right\|_F^2
}{
	\|\operatorname{bcirc}(\mathcal{A}_{:,\tau_{j_k},:})\|_2^2
}.
\]
Equivalently,
\[
\|\mathcal{E}^{(k+1)}\|_F^2
\le
\|\mathcal{E}^{(k)}\|_F^2
-
\frac{
	\|\mathcal{A}_{:,\tau_{j_k},:}\|_F^2
}{
	\|\operatorname{bcirc}(\mathcal{A}_{:,\tau_{j_k},:})\|_2^2
}
\frac{
	\left\|
	(\mathcal{A}_{:,\tau_{j_k},:})^T
	*
	\mathcal{E}^{(k)}
	\right\|_F^2
}{
	\|\mathcal{A}_{:,\tau_{j_k},:}\|_F^2
}.
\]
Let $
\Gamma_1
=
\max_{\tau_j\in\mathcal{T}}
\frac{
	\|\operatorname{bcirc}(\mathcal{A}_{:,\tau_j,:})\|_2^2
}{
	\|\mathcal{A}_{:,\tau_j,:}\|_F^2
},
$ then
\begin{equation}\label{eqn:tRABCD-one-step-bound-E}
	\|\mathcal{E}^{(k+1)}\|_F^2
	\le
	\|\mathcal{E}^{(k)}\|_F^2
	-
	\frac{1}{\Gamma_1}
	\frac{
		\left\|
		(\mathcal{A}_{:,\tau_{j_k},:})^T
		*
		\mathcal{E}^{(k)}
		\right\|_F^2
	}{
		\|\mathcal{A}_{:,\tau_{j_k},:}\|_F^2
	}.
\end{equation}

By taking the conditional expectation with respect to the sampled block
$\tau_{j_k}$, it yields that
\begin{align}
	\mathbb{E}\!\left[
	\|\mathcal{E}^{(k+1)}\|_F^2
	\mid
	\mathcal{E}^{(k)}
	\right]
	&\le
	\|\mathcal{E}^{(k)}\|_F^2
	-
	\frac{1}{\Gamma_1}
	\sum_{j=1}^{q}
	\frac{
		\|\mathcal{A}_{:,\tau_j,:}\|_F^2
	}{
		\|\mathcal{A}\|_F^2
	}
	\frac{
		\bigl\|
		(\mathcal{A}_{:,\tau_j,:})^T
		*
		\mathcal{E}^{(k)}
		\bigr\|_F^2
	}{
		\|\mathcal{A}_{:,\tau_j,:}\|_F^2
	}
	\nonumber\\
	&=
	\|\mathcal{E}^{(k)}\|_F^2
	-
	\frac{1}
	{\Gamma_1\|\mathcal{A}\|_F^2}
	\sum_{j=1}^{q}
	\bigl\|
	(\mathcal{A}_{:,\tau_j,:})^T
	*
	\mathcal{E}^{(k)}
	\bigr\|_F^2
	\nonumber\\
	&=
	\|\mathcal{E}^{(k)}\|_F^2
	-
	\frac{1}
	{\Gamma_1\|\mathcal{A}\|_F^2}
	\bigl\|
	\mathcal{A}^T * \mathcal{E}^{(k)}
	\bigr\|_F^2 .
	\label{eqn:tRABCD-cond-exp-E}
\end{align}

Given that
\[
\mathcal{E}^{(k)}
=
\mathcal{A}*
\left(
\mathcal{X}_{LS}-\mathcal{X}^{(k)}
\right)
\in
\operatorname{range}_p(\mathcal{A}),
\]
and $\operatorname{bcirc}(\mathcal{A})$ has full column rank, it follows
\[
\bigl\|
\mathcal{A}^T * \mathcal{E}^{(k)}
\bigr\|_F
\ge
\sigma_{\min}(\operatorname{bcirc}(\mathcal{A}))
\,
\|\mathcal{E}^{(k)}\|_F .
\]
By substituting this estimate into Eq.~\eqref{eqn:tRABCD-cond-exp-E}, it gives
\[
\mathbb{E}\!\left[
\|\mathcal{E}^{(k+1)}\|_F^2
\mid
\mathcal{E}^{(k)}
\right]
\le
\left(
1-
\frac{
	\sigma_{\min}^2(\operatorname{bcirc}(\mathcal{A}))
}{
	\Gamma_1\|\mathcal{A}\|_F^2
}
\right)
\|\mathcal{E}^{(k)}\|_F^2 .
\]

By taking the total expectation on both sides and applying the above recursive
inequality for $(k+1)$ steps, it is obtained that
\[
\mathbb{E}\|\mathcal{E}^{(k+1)}\|_F^2
\le
\left(
1-
\frac{
	\sigma_{\min}^2(\operatorname{bcirc}(\mathcal{A}))
}{
	\Gamma_1\|\mathcal{A}\|_F^2
}
\right)^{k+1}
\|\mathcal{E}^{(0)}\|_F^2 .
\]
This completes the proof.
\end{proof}

\section{Tensor randomized average block coordinate descent method with heavy-ball momentum}
\label{sec:trabcd}

Heavy-ball momentum, first introduced by Polyak \cite{1964P} in 1964, accelerates gradient-based methods by incorporating a momentum term from the previous iteration. This technique has been widely applied to accelerate randomized iterative methods \cite{2020LR, 2024ZH}. Recently, a heavy-ball-type accelerated tensor Kaczmarz method was proposed in \cite{2024LL}, demonstrating a faster convergence rate than the  tensor randomized average Kaczmarz method.

Motivated by these advancements, we incorporate the heavy-ball momentum technique into our tensor randomized average block coordinate descent method (Algorithm \ref{alg:tRABCDa}). This strategy ensures that each iteration achieves optimal residual reduction within the currently available two-dimensional affine subspace.

At the $k$-th iteration of the new method, a block $\tau_{j_k}\in\mathcal{T}$ is randomly selected with the same probability distribution as Algorithm~\ref{alg:tRABCDa}. We define a tensor $\mathcal{Z}_{\tau_{j_k}}$, whose horizontal slices are all zeros except for the slice corresponding to $\tau_{j_k}$, given by,
\begin{equation}\label{eqn:MR-Zdef}
	\mathcal{Z}_{\tau_{j_k},:,:}
	=
	\bigl(\mathcal{A}_{:,\tau_{j_k},:}\bigr)^T*\mathcal{R}^{(k)}.
\end{equation}

Based on this, the heavy-ball momentum iteration formula is defined as
\begin{equation}\label{eqn:MR-Xupdate}
	\mathcal{X}^{(k+1)}
	=
	\mathcal{X}^{(k)}
	+
	\alpha_k\,\mathcal{Z}_{\tau_{j_k}}
	+
	\beta_k\bigl(\mathcal{X}^{(k)}-\mathcal{X}^{(k-1)}\bigr),
	\qquad t\ge 1,
\end{equation}
where $\alpha_k$ is the step size and $\beta_k$ is the momentum parameter.
From Eq.~\eqref{eqn:MR-Xupdate}, the residual recursion becomes
\begin{equation}\label{eqn:MR-Rupdate}
	\mathcal{R}^{(k+1)}
	=
	\mathcal{R}^{(k)}
	-
	\alpha_k\,\mathcal{A}*\mathcal{Z}_{\tau_{j_k}}
	+
	\beta_k\bigl(\mathcal{R}^{(k)}-\mathcal{R}^{(k-1)}\bigr),
	\qquad t\ge 1,
\end{equation}
Let
\begin{equation}\label{eqn:MR-directions}
	\mathcal{U}_k := \mathcal{A}*\mathcal{Z}_{\tau_{j_k}},
	\qquad
	\mathcal{D}_k := \mathcal{R}^{(k)}-\mathcal{R}^{(k-1)},
\end{equation}
and Eq.~\eqref{eqn:MR-Rupdate} is optimized as
$$
\mathcal{E}^{(k+1)}(\alpha,\beta)
=
\mathcal{E}^{(k)}
-
\alpha\,\mathcal{U}_k
+
\beta\,\mathcal{D}_k.
$$

The parameters $\alpha_k$ and $\beta_k$ are chosen to minimize the Frobenius norm of $\mathcal{E}^{(k+1)}(\alpha,\beta)$ over the current two-dimensional affine space, that is 
$$
	(\alpha_k,\beta_k)
	=
	\arg\min_{\alpha,\beta\in\mathbb{R}}
	\left\|
	\mathcal{E}^{(k)}
	-
	\alpha\mathcal{U}_k
	+
	\beta\mathcal{D}_k
	\right\|_F^2.
$$

By the optimality condition of orthogonal projection, the optimal solution satisfies
\begin{equation*}\label{eqn:MR-orthogonality}
	\left\langle \mathcal{E}^{(k+1)},\ \mathcal{U}_k\right\rangle = 0,
	\qquad
	\left\langle \mathcal{E}^{(k+1)},\ \mathcal{D}_k\right\rangle = 0.
\end{equation*}
The closed-form expressions of $\alpha_k$ and $\beta_k$ are deduced below. Defining the objective function as
\begin{equation*}\label{eqn:MR-G}
	\mathcal{G}(\alpha,\beta)
	:=
	\frac{1}{2}\left\|
	\mathcal{E}^{(k)}
	-
	\alpha\,\mathcal{U}_k
	+
	\beta\,\mathcal{D}_k
	\right\|_F^2,
\end{equation*}
By taking partial derivatives with respect to $\alpha$ and $\beta$, and setting them to zero, it gives that
\begin{equation}\label{eqn:MR-normal-eq}
	\left\{
	\begin{aligned}
		\alpha_k\|\mathcal{U}_k\|_F^2 - \beta_k\langle \mathcal{U}_k,\mathcal{D}_k\rangle
		&= \langle \mathcal{U}_k,\mathcal{E}^{(k)}\rangle,\\
		\alpha_k\langle \mathcal{U}_k,\mathcal{D}_k\rangle - \beta_k\|\mathcal{D}_k\|_F^2
		&= \langle \mathcal{D}_k,\mathcal{E}^{(k)}\rangle.
	\end{aligned}
	\right.
\end{equation}

According to the proof of Theorem~\ref{thm:TRABCDA}, it holds that
\begin{equation}\label{eqn:MR-inner-simplify}
	\langle \mathcal{U}_k,\mathcal{E}^{(k)}\rangle=
	\|\mathcal{Z}_{\tau_{j_k}}\|_F^2.
\end{equation}
Furthermore, the residual update satisfies
\[
\mathcal D_k
=
\mathcal R^{(k)}-\mathcal R^{(k-1)}
=
-\alpha_{k-1}\mathcal U_{k-1}
+
\beta_{k-1}\mathcal D_{k-1},
\]
which implies that $
\mathcal D_k
\in
\operatorname{span}\{\mathcal U_{k-1},\mathcal D_{k-1}\}.
$
Due to the first-order optimality condition of the minimal residual problem at $(k-1)$-th
iteration, $\mathcal E^{(k)}$ is orthogonal to
$\operatorname{span}\{\mathcal U_{k-1},\mathcal D_{k-1}\}$. Consequently,
\begin{equation}\label{eqn:DR-R-orthogonal}
	\langle \mathcal{D}_k,\mathcal{E}^{(k)}\rangle = 0.
\end{equation}

To simplify the notation, we define
\begin{equation}\label{eqn:MR-theta}
	\theta_k
	:=
	\|\mathcal{U}_k\|_F^2\ \|\mathcal{D}_k\|_F^2
	-
	\langle \mathcal{U}_k,\mathcal{D}_k\rangle^2.
\end{equation}
When $\theta_k=0$, it is observed that $\mathcal{U}_k$ and $\mathcal{D}_k$ are linearly dependent, so the direction subspace degenerates into a one-dimensional space. In this case, $\beta_k$ is set to zero, and a one-dimensional minimization along $\mathcal{U}_k$ is performed as follows:
\begin{equation}\label{eqn:MR-alpha-degenerate}
	\alpha_k
=
\frac{\langle \mathcal{U}_k,\mathcal{E}^{(k)}\rangle}{\|\mathcal{U}_k\|_F^2}
=
\frac{\|\mathcal{Z}_{\tau_{j_k}}\|_F^2}{\|\mathcal{A}*\mathcal{Z}_{\tau_{j_k}}\|_F^2}.
\end{equation}
When $\theta_k>0$, Eq.~\eqref{eqn:MR-normal-eq} admits a unique solution.

By using Eq.~\eqref{eqn:MR-inner-simplify} and  Eq.~\eqref{eqn:DR-R-orthogonal}, the optimal parameters are obtained as
\begin{equation}\label{eqn:MR-alpha-beta}
	\left\{
	\begin{aligned}
		\alpha_k
		&=
		\frac{
			\|\mathcal{Z}_{\tau_{j_k}}\|_F^2\ \|\mathcal{D}_k\|_F^2
		}{
			\theta_k
		},\\[0.3em]
		\beta_k
		&=
		\frac{
			\|\mathcal{Z}_{\tau_{j_k}}\|_F^2\ \langle \mathcal{U}_k,\mathcal{D}_k\rangle
		}{
			\theta_k
		}.
	\end{aligned}
	\right.
\end{equation}

Building upon the derivations above, we propose the tensor randomized average block coordinate descent method with heavy-ball momentum, detailed in Algorithm~\ref{alg:tRABCD-HB}.

\begin{algorithm}[H]
	\caption{The tensor randomized average block coordinate descent method with heavy-ball momentum (tRABCD-HB)}
	\label{alg:tRABCD-HB}
	\begin{algorithmic}[1]
		\REQUIRE Initial guess $\mathcal{X}^{(0)}\in\mathbb{R}^{n_2\times p\times n_3}$, tensors $\mathcal{A}\in\mathbb{R}^{n_1\times n_2\times n_3}$, $\mathcal{B}\in\mathbb{R}^{n_1\times p\times n_3}$, block partition $\mathcal{T}=\{\tau_j\}_{j=1}^q$.
		\ENSURE Iterate $\mathcal{X}^{(k+1)}$.
		\STATE  $\mathcal{R}^{(0)}=\mathcal{B}-\mathcal{A}*\mathcal{X}^{(0)}$.
		\STATE $\mathcal{X}^{(-1)}=\mathcal{X}^{(0)}$, $\mathcal{R}^{(-1)}=\mathcal{R}^{(0)}$ (so that the momentum term is zero at $k=0$).
		\FOR{$k=0,1,2,\ldots$}
		\STATE Select an index $j_k\in\{1,\ldots,q\}$ with probability $\mathbb{P}(j_k=j)=\|\mathcal{A}_{:,\tau_j,:}\|_F^2/\|\mathcal{A}\|_F^2$, and set the selected block as $\tau_{j_k}$,
		\STATE Construct $\mathcal{Z}_{\tau_{j_k}}$: all horizontal slices except $\tau_{j_k}$ are set to zero, and $\mathcal{Z}_{\tau_{j_k},:,:}=\bigl(\mathcal{A}_{:,\tau_{j_k},:}\bigr)^T*\mathcal{R}^{(k)}$,
		\STATE Compute $\mathcal{U}_k=\mathcal{A}*\mathcal{Z}_{\tau_{j_k}}$ and $\mathcal{D}_k=\mathcal{R}^{(k)}-\mathcal{R}^{(k-1)}$,
		\STATE Compute $\theta_k=\|\mathcal{U}_k\|_F^2\|\mathcal{D}_k\|_F^2-\langle \mathcal{U}_k,\mathcal{D}_k\rangle^2$,
		\IF{$\theta_k=0$}
		\STATE $\beta_k=0$ and $\alpha_k=\|\mathcal{Z}_{\tau_{j_k}}\|_F^2/\|\mathcal{U}_k\|_F^2$.
		\ELSE
		\STATE Compute $\alpha_k,\beta_k$ by Eq.~\eqref{eqn:MR-alpha-beta}.
		\ENDIF
		\STATE $\mathcal{X}^{(k+1)}
		=
		\mathcal{X}^{(k)}
		+
		\alpha_k\,\mathcal{Z}_{\tau_{j_k}}
		+
		\beta_k\bigl(\mathcal{X}^{(k)}-\mathcal{X}^{(k-1)}\bigr)$,
		\STATE $\mathcal{R}^{(k+1)}
		=
		\mathcal{R}^{(k)}
		-
		\alpha_k\,\mathcal{U}_k
		+
		\beta_k\,\mathcal{D}_k$.
		\ENDFOR
	\end{algorithmic}
\end{algorithm}

\begin{remark}
	Note that only Frobenius norms and inner products are involved in Eq.~\eqref{eqn:MR-alpha-beta}, which are readily evaluated once $\mathcal{U}_k$ and $\mathcal{D}_k$ are computed. When $\theta_k$ is small, $\mathcal{U}_k$ and $\mathcal{D}_k$ become nearly linearly dependent, and the degenerate form \eqref{eqn:MR-alpha-degenerate} is adopted to avoid numerical instability.
\end{remark}

For the selected block $\tau_{j_k}$, we denote its cardinality by $s_k:=|\tau_{j_k}|.$ To ensure computational efficiency, the t-product is evaluated in the Fourier domain. Since
$\mathcal{A}$ remains fixed throughout the iteration, its Fourier transform
\[
\widehat{\mathcal{A}}=\operatorname{fft}(\mathcal{A},[],3)
\]
is precomputed before the iteration. Moreover, the transformed iterates  $\widehat{\mathcal{R}}^{(k)}$ and $\widehat{\mathcal{X}}^{(k)}$ are maintained and updated in the Fourier domain, so that no additional FFT or inverse FFT is needed at each iteration. Under this implementation, each t-product reduces to
matrix multiplications between frontal slices in the Fourier domain.

\begin{table}[!htbp]
	\centering
	\small
	\renewcommand{\arraystretch}{1.8} 
	\caption{Per-iteration flops of Algorithm~\ref{alg:tRABCDa} and Algorithm~\ref{alg:tRABCD-HB}.}
	\label{tab:fourier-complexity-comparison}
	\begin{tabular}{>{\centering\arraybackslash}p{0.35\textwidth} >{\centering\arraybackslash}p{0.35\textwidth}}
		\toprule
		Method & Per-iteration flops \\
		\midrule
		Algorithm~\ref{alg:tRABCDa}
		&
		$O\bigl( n_1s_kpn_3 \bigr)$
		\\
		Algorithm~\ref{alg:tRABCD-HB}
		&
		$O\bigl( n_1s_kpn_3+n_1pn_3+n_2pn_3 \bigr)$
		\\
		Additional cost of Algorithm~\ref{alg:tRABCD-HB}
		&
		$O(n_1pn_3+n_2pn_3)$
		\\
		\bottomrule
	\end{tabular}
\end{table}

The per-iteration computational complexity is summarized in
Table~\ref{tab:fourier-complexity-comparison}. Under the Fourier-domain implementation with precomputed
$\widehat{\mathcal{A}}$ and maintained Fourier-domain iterates, the additional
per-iteration cost of Algorithm~\ref{alg:tRABCD-HB} over
Algorithm~\ref{alg:tRABCDa} is
\[
O(n_1pn_3+n_2pn_3).
\]
This additional cost comes from the computation of
$\mathcal{D}_k$, the inner products and norms required by
the two-dimensional minimal residual projection and the full heavy-ball
momentum update of $\widehat{\mathcal{X}}^{(k)}$.

\subsection{Convergence Analysis}

The expected convergence bound of the proposed tensor  randomized average block coordinate descent method with heavy-ball momentum is established in this section.
A useful equality is firstly introduced below.
\begin{lemma}\label{lem:V-inner-identity}
	Let $\mathcal{U}_k:=\mathcal{A}*\mathcal{Z}_{\tau_{j_k}},
	\mathcal{D}_k:=\mathcal{R}^{(k)}-\mathcal{R}^{(k-1)},
	$
	and 
    $
	\mathcal{V}_k
	:=
	\langle \mathcal{U}_k,\mathcal{D}_k\rangle \mathcal{U}_k
	-
	\|\mathcal{U}_k\|_F^2 \mathcal{D}_k .
	$
	Let
	$
	\widetilde{\mathcal{E}}^{(k+1)}
	=
	\mathcal{E}^{(k)}
	-
	\frac{\|\mathcal{Z}_{\tau_{j_k}}\|_F^2}
	{\|\mathcal{U}_k\|_F^2}
	\mathcal{U}_k
	$
	be the contracted residual generated by Algorithm~\ref{alg:tRABCDa}.
	Then
	\begin{equation}\label{eqn:V-inner-identity-lemma}
		\left\langle
		\widetilde{\mathcal{E}}^{(k+1)},
		\mathcal{V}_k
		\right\rangle
		=
		\|\mathcal{Z}_{\tau_{j_k}}\|_F^2
		\langle
		\mathcal{U}_k,
		\mathcal{D}_k
		\rangle .
	\end{equation}
\end{lemma}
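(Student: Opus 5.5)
We expand the inner product bilinearly and use the two identities already established in the derivation of Algorithm~\ref{alg:tRABCD-HB}: Eq.~\eqref{eqn:MR-inner-simplify}, $\langle \mathcal{U}_k,\mathcal{E}^{(k)}\rangle=\|\mathcal{Z}_{\tau_{j_k}}\|_F^2$, and the orthogonality Eq.~\eqref{eqn:DR-R-orthogonal}, $\langle \mathcal{D}_k,\mathcal{E}^{(k)}\rangle=0$. Throughout we assume $\mathcal{U}_k\neq\mathcal{O}$, so that $\widetilde{\mathcal{E}}^{(k+1)}$ is well defined. If $\mathcal{U}_k=\mathcal{O}$, we interpret the coefficient as zero; then $\mathcal{V}_k=\mathcal{O}$ and both sides vanish.

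First we compute $\langle \mathcal{E}^{(k)},\mathcal{V}_k\rangle$. By linearity,
\[
\langle \mathcal{E}^{(k)},\mathcal{V}_k\rangle
=\langle \mathcal{U}_k,\mathcal{D}_k\rangle\langle \mathcal{E}^{(k)},\mathcal{U}_k\rangle-\|\mathcal{U}_k\|_F^2\langle \mathcal{E}^{(k)},\mathcal{D}_k\rangle
=\|\mathcal{Z}_{\tau_{j_k}}\|_F^2\langle \mathcal{U}_k,\mathcal{D}_k\rangle .
\]
Next we compute $\langle \mathcal{U}_k,\mathcal{V}_k\rangle$:
\[
\langle \mathcal{U}_k,\mathcal{V}_k\rangle
=\langle \mathcal{U}_k,\mathcal{D}_k\rangle\|\mathcal{U}_k\|_F^2-\|\mathcal{U}_k\|_F^2\langle \mathcal{U}_k,\mathcal{D}_k\rangle=0 .
\]
So $\mathcal{V}_k$ is orthogonal to $\mathcal{U}_k$; it is the Gram--Schmidt component of $\mathcal{D}_k$ orthogonal to $\mathcal{U}_k$, scaled by $-\|\mathcal{U}_k\|_F^2$. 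Combining the two computations gives
\[
\langle \widetilde{\mathcal{E}}^{(k+1)},\mathcal{V}_k\rangle
=\langle \mathcal{E}^{(k)},\mathcal{V}_k\rangle-\frac{\|\mathcal{Z}_{\tau_{j_k}}\|_F^2}{\|\mathcal{U}_k\|_F^2}\langle \mathcal{U}_k,\mathcal{V}_k\rangle
=\|\mathcal{Z}_{\tau_{j_k}}\|_F^2\langle \mathcal{U}_k,\mathcal{D}_k\rangle,
\]
which is the claimed identity.

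The main point needing care is the justification of $\langle \mathcal{D}_k,\mathcal{E}^{(k)}\rangle=0$ for all $k$, including the start-up and degenerate steps.
\begin{itemize}
\item For $k=0$, we have $\mathcal{D}_0=\mathcal{O}$ by the initialization $\mathcal{R}^{(-1)}=\mathcal{R}^{(0)}$, so the orthogonality holds trivially.
\item For $k\ge1$ with $\theta_{k-1}>0$, the two-dimensional minimal residual condition at step $k-1$ makes $\mathcal{E}^{(k)}$ orthogonal to both $\mathcal{U}_{k-1}$ and $\mathcal{D}_{k-1}$. Since $\mathcal{D}_k=-\alpha_{k-1}\mathcal{U}_{k-1}+\beta_{k-1}\mathcal{D}_{k-1}$, it follows that $\langle \mathcal{D}_k,\mathcal{E}^{(k)}\rangle=0$.
\item For $k\ge1$ with $\theta_{k-1}=0$, we have $\beta_{k-1}=0$, so $\mathcal{D}_k=-\alpha_{k-1}\mathcal{U}_{k-1}$. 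The one-dimensional minimization along $\mathcal{U}_{k-1}$ gives $\mathcal{E}^{(k)}\perp\mathcal{U}_{k-1}$, which again yields $\langle \mathcal{D}_k,\mathcal{E}^{(k)}\rangle=0$.
\end{itemize}
For Eq.~\eqref{eqn:MR-inner-simplify}, we use $\mathcal{A}^T*\mathcal{R}^{(k)}=\mathcal{A}^T*\mathcal{E}^{(k)}$, as shown in the proof of Theorem~\ref{thm:TRABCDA}. By the adjoint property this gives $\langle \mathcal{A}*\mathcal{Z}_{\tau_{j_k}},\mathcal{E}^{(k)}\rangle=\langle \mathcal{Z}_{\tau_{j_k}},\mathcal{A}^T*\mathcal{E}^{(k)}\rangle=\|\mathcal{Z}_{\tau_{j_k}}\|_F^2$, since $\mathcal{Z}_{\tau_{j_k}}$ is supported on the block $\tau_{j_k}$.
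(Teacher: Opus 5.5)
Your proof is correct and is the paper's argument: expand bilinearly, note $\langle \mathcal{U}_k,\mathcal{V}_k\rangle=0$, and evaluate $\langle \mathcal{E}^{(k)},\mathcal{V}_k\rangle$ using Eq.~\eqref{eqn:MR-inner-simplify} and Eq.~\eqref{eqn:DR-R-orthogonal}. Your added check that $\langle \mathcal{D}_k,\mathcal{E}^{(k)}\rangle=0$ also holds at $k=0$ and after a degenerate step with $\theta_{k-1}=0$, along with your handling of $\mathcal{U}_k=\mathcal{O}$, makes explicit what the paper leaves implicit.
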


\begin{proof}
	By definition,
	\[
	\left\langle \widetilde{\mathcal{E}}^{(k+1)}, \mathcal{V}_k \right\rangle
	=
	\left\langle \mathcal{E}^{(k)} - \frac{\|\mathcal{Z}_{\tau_{j_k}}\|_F^2}{\|\mathcal{U}_k\|_F^2} \mathcal{U}_k,\, \mathcal{V}_k \right\rangle
	=
	\left\langle \mathcal{E}^{(k)}, \mathcal{V}_k \right\rangle
	-
	\frac{\|\mathcal{Z}_{\tau_{j_k}}\|_F^2}{\|\mathcal{U}_k\|_F^2} \left\langle \mathcal{U}_k, \mathcal{V}_k \right\rangle.
	\]
	Furthermore, it is easy to verify from the definition of $\mathcal{V}_k$ that it is orthogonal to $\mathcal{U}_k$:
	\[
	\left\langle \mathcal{U}_k, \mathcal{V}_k \right\rangle
	= \langle \mathcal{U}_k, \langle \mathcal{U}_k, \mathcal{D}_k \rangle \mathcal{U}_k - \|\mathcal{U}_k\|_F^2 \mathcal{D}_k \rangle
	= \langle \mathcal{U}_k, \mathcal{D}_k \rangle \|\mathcal{U}_k\|_F^2 - \|\mathcal{U}_k\|_F^2 \langle \mathcal{U}_k, \mathcal{D}_k \rangle = 0.
	\]
	Hence,
	\[
	\left\langle \widetilde{\mathcal{E}}^{(k+1)}, \mathcal{V}_k \right\rangle = \left\langle \mathcal{E}^{(k)}, \mathcal{V}_k \right\rangle.
	\]
	
	By using $\langle \mathcal{E}^{(k)}, \mathcal{D}_k \rangle = 0$ and $\langle \mathcal{E}^{(k)}, \mathcal{U}_k \rangle = \|\mathcal{Z}_{\tau_{j_k}}\|_F^2$, it holds that
	\[
	\left\langle \widetilde{\mathcal{E}}^{(k+1)}, \mathcal{V}_k \right\rangle
	=
	\|\mathcal{Z}_{\tau_{j_k}}\|_F^2 \, \langle \mathcal{U}_k, \mathcal{D}_k \rangle.
	\]
\end{proof}

\begin{theorem}\label{thm:tRABCD-HB}
	Assume that $\operatorname{bcirc}(\mathcal{A})$ is full column rank and
	$\theta_k$ is defined by Eq.~\eqref{eqn:MR-theta}. The sequence $\{\mathcal{X}^{(k)}\}_{k=0}^{\infty}$ generated by
	Algorithm~\ref{alg:tRABCD-HB} satisfies
	\begin{equation}\label{eqn:tRABCD-HB}
		\mathbb{E}\|\mathcal{E}^{(k+1)}\|_F^2
		\le
		\left[
		(1-\Gamma_2^2)
		\left(
		1-\frac{\sigma_{\min}^2(\operatorname{bcirc}(\mathcal{A}))}
		{\Gamma_1\|\mathcal{A}\|_F^2}
		\right)
		\right]^{k+1}
		\|\mathcal{E}^{(0)}\|_F^2,
	\end{equation}
	where $
		\Gamma_2^2
		:=
		\inf_k
		\frac{
			\left\langle
			\widetilde{\mathcal{E}}^{(k+1)},
			\mathcal{V}_k
			\right\rangle^2
		}{
			\|\mathcal{V}_k\|_F^2
			\|\widetilde{\mathcal{E}}^{(k+1)}\|_F^2
		}
	$,  $\widetilde{\mathcal{E}}^{(k+1)}$ and $\mathcal{V}_k$ are defined in Lemma~\ref{lem:V-inner-identity}.
\end{theorem}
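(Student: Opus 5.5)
The plan is to compare one step of Algorithm~\ref{alg:tRABCD-HB} with the single step of Algorithm~\ref{alg:tRABCDa} taken from the same $\mathcal{E}^{(k)}$ and the same sampled block $\tau_{j_k}$. Then I will show that the two-dimensional minimal residual projection removes an extra fraction $\Gamma_2^2$ of $\|\widetilde{\mathcal{E}}^{(k+1)}\|_F^2$.

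\textbf{Step 1: geometric decomposition.} Since $(\alpha_k,\beta_k)$ minimizes $\|\mathcal{E}^{(k)}-\alpha\mathcal{U}_k+\beta\mathcal{D}_k\|_F$, the vector $\mathcal{E}^{(k+1)}$ is $\mathcal{E}^{(k)}$ minus its orthogonal projection onto $S_k:=\operatorname{span}\{\mathcal{U}_k,\mathcal{D}_k\}$.

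\begin{itemize}
\item If $\theta_k>0$, the orthogonal complement of $\mathcal{U}_k$ inside $S_k$ is spanned by $\mathcal{V}_k$. One checks $\langle \mathcal{U}_k,\mathcal{V}_k\rangle=0$ and $\mathcal{V}_k\neq\mathcal{O}$ exactly when $\theta_k>0$, since $\|\mathcal{V}_k\|_F^2=\|\mathcal{U}_k\|_F^2\theta_k$.
\item Projecting first onto $\mathcal{U}_k$ gives $\widetilde{\mathcal{E}}^{(k+1)}$, using \eqref{eqn:MR-inner-simplify} for the coefficient. Projecting the remainder onto $\mathcal{V}_k$ then gives
\[
\mathcal{E}^{(k+1)}=\widetilde{\mathcal{E}}^{(k+1)}-\frac{\langle\widetilde{\mathcal{E}}^{(k+1)},\mathcal{V}_k\rangle}{\|\mathcal{V}_k\|_F^2}\mathcal{V}_k .
\]
\item By Pythagoras,
\[
\|\mathcal{E}^{(k+1)}\|_F^2=\|\widetilde{\mathcal{E}}^{(k+1)}\|_F^2\Bigl(1-\frac{\langle\widetilde{\mathcal{E}}^{(k+1)},\mathcal{V}_k\rangle^2}{\|\mathcal{V}_k\|_F^2\|\widetilde{\mathcal{E}}^{(k+1)}\|_F^2}\Bigr)\le(1-\Gamma_2^2)\|\widetilde{\mathcal{E}}^{(k+1)}\|_F^2 .
\]
\item Lemma~\ref{lem:V-inner-identity} identifies the numerator as $\|\mathcal{Z}_{\tau_{j_k}}\|_F^4\langle\mathcal{U}_k,\mathcal{D}_k\rangle^2$. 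This is consistent with \eqref{eqn:MR-alpha-beta}, and one could optionally verify it to confirm the decomposition matches the algorithm's $(\alpha_k,\beta_k)$.
\end{itemize}

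\textbf{Step 2: degenerate cases.} When $\theta_k=0$ (including $k=0$, where $\mathcal{D}_0=\mathcal{O}$), the algorithm uses $\beta_k=0$, so $\mathcal{E}^{(k+1)}=\widetilde{\mathcal{E}}^{(k+1)}$. The bound with factor $(1-\Gamma_2^2)$ then needs the convention that the ratio defining $\Gamma_2^2$ is interpreted as $0$ when $\mathcal{V}_k=\mathcal{O}$, or that the infimum is taken over nondegenerate steps. I would state this convention explicitly. If $\widetilde{\mathcal{E}}^{(k+1)}=\mathcal{O}$, the bound is trivial.

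\textbf{Step 3: expectation.} $\widetilde{\mathcal{E}}^{(k+1)}$ is exactly the one-step update of Algorithm~\ref{alg:tRABCDa} applied to $\mathcal{E}^{(k)}$, with $\alpha$ from \eqref{eqn:n-alpha}. The proof of Theorem~\ref{thm:TRABCDA} used only the identity $(\mathcal{A}_{:,\tau_j,:})^T*\mathcal{R}^{(k)}=(\mathcal{A}_{:,\tau_j,:})^T*\mathcal{E}^{(k)}$ and $\mathcal{E}^{(k)}\in\operatorname{range}_p(\mathcal{A})$. Both still hold here, since $\mathcal{E}^{(k)}=\mathcal{A}*(\mathcal{X}_{LS}-\mathcal{X}^{(k)})$ for the momentum iterates as well. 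That proof therefore gives
\[
\mathbb{E}\bigl[\|\widetilde{\mathcal{E}}^{(k+1)}\|_F^2\mid\mathcal{E}^{(k)}\bigr]\le\Bigl(1-\frac{\sigma_{\min}^2(\operatorname{bcirc}(\mathcal{A}))}{\Gamma_1\|\mathcal{A}\|_F^2}\Bigr)\|\mathcal{E}^{(k)}\|_F^2 .
\]
Because $\Gamma_2^2$ is a deterministic infimum, $(1-\Gamma_2^2)$ is a constant factor. I will combine it with Step~1, take conditional and then total expectation, and unroll the recursion to obtain \eqref{eqn:tRABCD-HB}.

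\textbf{Main obstacle.} The main difficulty is the conditioning in Step~3. $\mathcal{D}_k$ depends on the past, so the filtration must be taken as generated by $(\mathcal{E}^{(k)},\mathcal{D}_k)$, i.e.\ by all previous samples. I will argue that the Theorem~\ref{thm:TRABCDA} bound holds conditionally on this history, because it depends only on $\mathcal{E}^{(k)}$ and the fresh sample $j_k$.

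A further subtlety is that $\Gamma_2$ is defined as an infimum over the random trajectory, so it is itself a random quantity. I would treat it as a uniform deterministic lower bound, which the statement implicitly assumes, and remark that the estimate is meaningful only when this infimum is positive.
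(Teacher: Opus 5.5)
Your proposal is correct and follows the paper's proof. You compare with the tRABCD iterate $\widetilde{\mathcal{E}}^{(k+1)}$ from the same block and obtain $\mathcal{E}^{(k+1)}=\widetilde{\mathcal{E}}^{(k+1)}-\frac{\langle\widetilde{\mathcal{E}}^{(k+1)},\mathcal{V}_k\rangle}{\|\mathcal{V}_k\|_F^2}\mathcal{V}_k$; you get this by Gram--Schmidt on $\operatorname{span}\{\mathcal{U}_k,\mathcal{V}_k\}$, whereas the paper substitutes the closed-form $\alpha_k,\beta_k$, $\|\mathcal{V}_k\|_F^2=\|\mathcal{U}_k\|_F^2\theta_k$ and Lemma~\ref{lem:V-inner-identity}. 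You then combine Pythagoras with the one-step bound of Theorem~\ref{thm:TRABCDA} and unroll, exactly as the paper does.

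Your treatment of degenerate steps and of $\Gamma_2$ as a deterministic constant is more careful than the paper's, with one correction. Only your first convention (ratio $:=0$ when $\mathcal{V}_k=\mathcal{O}$) gives the factor $(1-\Gamma_2^2)$ at $\theta_k=0$ steps, and since $\mathcal{D}_0=\mathcal{O}$ always, it forces $\Gamma_2=0$, so the bound collapses to that of Theorem~\ref{thm:TRABCDA}. Restricting the infimum to nondegenerate steps instead leaves the step $k=0$, where $\mathcal{E}^{(1)}=\widetilde{\mathcal{E}}^{(1)}$, without that factor.
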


\begin{proof}
	If the case $\theta_k=0$ occurs frequently, the iteration reduces to a one-dimensional minimal residual update, and its convergence is guaranteed by the analysis of Algorithm~\ref{alg:tRABCDa}. Therefore, only the general case $\theta_k > 0$ is considered below.
	
	Let $\tilde{\mathcal{X}}^{(k+1)}$ denote the iterate obtained by performing one step of Algorithm~\ref{alg:tRABCDa} from $\mathcal{X}^{(k)}$. By Theorem~\ref{thm:TRABCDA}, the following bound satisfies:
	\begin{equation}\label{eqn:n-4}
		\mathbb{E}\|\widetilde{\mathcal{E}}^{(k+1)}\|_F^2
		\le
		\left(1-\frac{\sigma^2_{\min}(\operatorname{bcirc}(\mathcal{A}))}{\Gamma_1\|\mathcal{A}\|_F^2}\right)
		\|\mathcal{E}^{(k)}\|_F^2.
	\end{equation}
	
	Since both $\widetilde{\mathcal{E}}^{(k+1)}$ and $\mathcal{E}^{(k+1)}$ belong to the affine space
	\[
	\mathcal{E}^{(k)}+\operatorname{span}\{\mathcal{U}_k,\mathcal{D}_k\},
	\]
	and the optimal projection condition ensures that
	\[
	\mathcal{E}^{(k+1)}\perp \operatorname{span}\{\mathcal{U}_k,\mathcal{D}_k\},
	\]
	it follows that
	\[
	\widetilde{\mathcal{E}}^{(k+1)}-\mathcal{E}^{(k+1)}
	\in \operatorname{span}\{\mathcal{U}_k,\mathcal{D}_k\},\quad
	\left\langle
	\mathcal{E}^{(k+1)},
	\widetilde{\mathcal{E}}^{(k+1)}-\mathcal{E}^{(k+1)}
	\right\rangle=0.
	\]
	Hence, by the Pythagorean theorem,
	\begin{equation}\label{eqn:n-norme}
		\|\mathcal{E}^{(k+1)}\|_F^2
		=
		\|\widetilde{\mathcal{E}}^{(k+1)}\|_F^2
		-
		\|\widetilde{\mathcal{E}}^{(k+1)}-\mathcal{E}^{(k+1)}\|_F^2.
	\end{equation}
	
	From the iteration formula, it holds that
	\[
	\widetilde{\mathcal{E}}^{(k+1)}  = \mathcal{E}^{(k)}-\frac{\|(\mathcal{A}_{:, \tau_{j_k},:})^T * \mathcal{R}^{(k)}\|_F^2}{\|\mathcal{A}_{:, \tau_{j_k},:}*(\mathcal{A}_{:, \tau_{j_k},:})^T * \mathcal{R}^{(k)}\|_F^2}\mathcal{A}_{:, \tau_{j_k},:}*(\mathcal{A}_{:, \tau_{j_k},:})^T * \mathcal{R}^{(k)}.
	\]
	
	According to Eq.~\eqref{eqn:MR-theta}, it gives
	\begin{align*}
		\mathcal{E}^{(k+1)}
		&= \mathcal{E}^{(k)}
		- \frac{\|\mathcal{Z}_{\tau_{j_k}}\|_F^2 \|\mathcal{D}_k\|_F^2}{\theta_k}  \mathcal{U}_k  + \frac{\|\mathcal{Z}_{\tau_{j_k}}\|_F^2
			\langle \mathcal{U}_k , \mathcal{D}_k \rangle}{\theta_k}
		\mathcal{D}_k \notag \\
		&= \widetilde{\mathcal{E}}^{(k+1)}
		- \frac{\|\mathcal{Z}_{\tau_{j_k}}\|_F^2
			\langle\mathcal{U}_k , \mathcal{D}_k \rangle^2}
		{\|\mathcal{A} * \mathcal{Z}_{\tau_{j_k}}\|_F^2 \theta_k}
		\mathcal{U}_k  + \frac{\|\mathcal{Z}_{\tau_{j_k}}\|_F^2
			\langle\mathcal{U}_k , \mathcal{D}_k \rangle}
		{\|\mathcal{A} * \mathcal{Z}_{\tau_{j_k}}\|_F^2 \theta_k}
		\|\mathcal{U}_k \|_F^2
		\mathcal{D}_k.
	\end{align*}
	
	Let
	\[
	\mathcal{V}_k := \langle \mathcal{U}_k, \mathcal{D}_k \rangle \mathcal{U}_k - \|\mathcal{U}_k\|_F^2 \mathcal{D}_k,
	\]
	then
	\begin{align}
		\mathcal{E}^{(k+1)}
		= \widetilde{\mathcal{E}}^{(k+1)}
		- \frac{\|\mathcal{Z}_{\tau_{j_k}}\|_F^2
			\langle\mathcal{U}_k, \mathcal{D}_k \rangle}
		{\|\mathcal{A} * \mathcal{Z}_{\tau_{j_k}}\|_F^2 \theta_k}
		\mathcal{V}_k. \label{eqn:3.18}
	\end{align}
	
	By substituting $\| \mathcal{V}_k \|_F^2 = \| \mathcal{A} \ast \mathcal{Z}_{\tau_{j_k}} \|_F^2 \theta_k$ and Eq.~\eqref{eqn:V-inner-identity-lemma} into Eq.~\eqref{eqn:3.18}, it gives
	\begin{align*}
		\mathcal{E}^{(k+1)}
		= \widetilde{\mathcal{E}}^{(k+1)}
		- \frac{\langle \widetilde{\mathcal{E}}^{(k+1)}, \mathcal{V}_k \rangle}
		{\|\mathcal{V}_k\|_F^2}
		\mathcal{V}_k.
	\end{align*}
	Therefore,
	\begin{align}
		\|\mathcal{E}^{(k+1)} - \widetilde{\mathcal{E}}^{(k+1)}\|_F^2
		&= \frac{\langle \widetilde{\mathcal{E}}^{(k+1)}, \mathcal{V}_k \rangle^2}
		{\|\mathcal{V}_k\|_F^2} \notag \\
		&= \frac{\langle \widetilde{\mathcal{E}}^{(k+1)}, \mathcal{V}_k \rangle^2}
		{\|\mathcal{V}_k\|_F^2 \|\widetilde{\mathcal{E}}^{(k+1)}\|_F^2}
		\|\widetilde{\mathcal{E}}^{(k+1)}\|_F^2.
		\label{eqn:328}
	\end{align}
	
	Based on Eq.~\eqref{eqn:328}, it is obtained that
	\begin{align}
		\|\mathcal{E}^{(k+1)}\|_F^2
		\le (1 - \Gamma_2^2)
		\|\widetilde{\mathcal{E}}^{(k+1)}\|_F^2,
	\end{align}
	where $
	\Gamma_2^2
	:=
	\inf_k
	\frac{
		\left\langle
		\widetilde{\mathcal{E}}^{(k+1)},
		\mathcal{V}_k
		\right\rangle^2
	}{
		\|\mathcal{V}_k\|_F^2
		\|\widetilde{\mathcal{E}}^{(k+1)}\|_F^2
	}
	$.
	
	By taking conditional expectation, it follows that
	\begin{align*}
		\mathbb{E}\!\left[ \|\mathcal{E}^{(k+1)}\|_F^2 \mid \mathcal{X}^{(k)} \right]
		\le (1 - \Gamma_2^2)\,
		\mathbb{E}\!\left[ \|\widetilde{\mathcal{E}}^{(k+1)}\|_F^2 \mid \mathcal{X}^{(k)} \right].
	\end{align*}
	Then by taking full expectation and combining with Eq.~\eqref{eqn:n-4}, it is obvious that
	\begin{align}
		\mathbb{E}\|\mathcal{E}^{(k+1)}\|_F^2
		\le (1 - \Gamma_2^2)
		\left( 1 - \frac{\sigma_{\min}^2(\operatorname{bcirc}(\mathcal{A}))}
		{\Gamma_1 \|\mathcal{A}\|_F^2} \right)
		\|\mathcal{E}^{(k)}\|_F^2.
		\label{eqn:330}
	\end{align}
	
	Finally, the proof is completed by applying the above inequality recursively for $(k+1)$ steps.
\end{proof}

\begin{remark}
	By Cauchy--Schwarz inequality, for every index $k$ such that
	$\mathcal{V}_k\ne 0$ and $\widetilde{\mathcal{E}}^{(k+1)}\ne 0$, it gives that
	\[
	0\le
	\frac{
		\left\langle
		\widetilde{\mathcal{E}}^{(k+1)},\mathcal{V}_k
		\right\rangle^2
	}{
		\|\mathcal{V}_k\|_F^2
		\|\widetilde{\mathcal{E}}^{(k+1)}\|_F^2
	}
	\le 1,
	\]
	where equality holds if and only if
	$\widetilde{\mathcal{E}}^{(k+1)}$ and $\mathcal{V}_k$ are linearly dependent.
	Hence $\Gamma_2^2=1$ occur only in the degenerate case where
	\[
	\widetilde{\mathcal{E}}^{(k+1)}
	\in \operatorname{span}\{\mathcal{V}_k\}
	\quad \text{for all admissible } k .
	\]
	In this case, the
	corresponding error after the projection step becomes zero,  and the method reaches the exact solution in finite steps. Furthermore, in the nondegenerate case where
	there exists at least one admissible index $k$ such that
	$\widetilde{\mathcal{E}}^{(k+1)}$ and $\mathcal{V}_k$ are not linearly dependent,
	one has
	\[
	0\le \Gamma_2^2<1.
	\]
	
	Therefore, Theorem~\ref{thm:tRABCD-HB} is understood as a nontrivial linear
	convergence result when $\Gamma_2^2<1$, while the limiting case
	$\Gamma_2^2=1$ is still consistent with the theorem.
\end{remark}

\begin{table}[!htbp]
    \centering
    \small 
    \setlength{\tabcolsep}{4pt} 
    \renewcommand{\arraystretch}{1.8} 
    \caption{Summary of convergence rates.}
    \label{tab:convergence_summary}
    \begin{tabular}{c c c}
        \toprule
        Method & Convergence Rate Upper Bound & Key Condition / Parameter \\
        \midrule 
        tRABCD (Algorithm~\ref{alg:tRABCDa})
        & $1 - \dfrac{\sigma_{\min}^2(\operatorname{bcirc}(\mathcal{A}))}{\Gamma_1 \|\mathcal{A}\|_F^2}$
        & $\Gamma_1 = \max\limits_{\tau_j \in \mathcal{T}} \dfrac{\|\operatorname{bcirc}(\mathcal{A}_{:,\tau_j,:})\|_2^2}{\|\mathcal{A}_{:,\tau_j,:}\|_F^2}$ \\
        tRABCD-HB (Algorithm~\ref{alg:tRABCD-HB})
        & $(1 - \Gamma_2^2)\left(1 - \dfrac{\sigma_{\min}^2(\operatorname{bcirc}(\mathcal{A}))}{\Gamma_1 \|\mathcal{A}\|_F^2}\right)$
        & $\Gamma_2 = \min\limits_k \dfrac{\langle \widetilde{\mathcal{E}}^{(k+1)}, \mathcal{V}_k \rangle}{\|\mathcal{V}_k\|_F \|\widetilde{\mathcal{E}}^{(k+1)}\|_F}$ \\
        \bottomrule
    \end{tabular}
\end{table}

The upper bounds of the convergence rates are summarized in Table~\ref{tab:convergence_summary}, along with the associated key constants of two tensor randomized average block coordinate descent methods. It is observed from Table~\ref{tab:convergence_summary} that the convergence factor of the tensor randomized average block coordinate descent method with heavy-ball momentum is multiplied by an additional factor $(1-\Gamma_2^2)$, and thus it admits a smaller upper bound.

Under the angle induced by the Frobenius inner product, it holds that
\[
\gamma_k :=
\frac{
	\left\langle
	\widetilde{\mathcal{E}}^{(k+1)},
	\mathcal{V}_k
	\right\rangle
}
{
	\|\widetilde{\mathcal{E}}^{(k+1)}\|_F
	\,\|\mathcal{V}_k\|_F
}
=
\cos\!\Bigl(
\angle\bigl(
\widetilde{\mathcal{E}}^{(k+1)},
\mathcal{V}_k
\bigr)
\Bigr)
\in [-1,1].
\]
Therefore, the factor $(1-\Gamma_2^2)$ admits a natural interpretation as worst-case $\sin^2$ term. When the two-dimensional subspace is capable of capturing more effective descent directions, $\Gamma_2$ becomes larger, and the advantage of the tensor randomized average block coordinate descent method with heavy-ball momentum over the tensor randomized average block coordinate descent method becomes more significant.

\section{Numerical Experiments}
\label{sec:num}
In this section, numerical experiments are conducted to verify the effectiveness of the proposed method. The number of iterations (denoted by ``IT'') and the computational time (denoted by ``CPU'') are adopted as evaluation metrics. The tensor least squares problem is constructed as
\[
\mathcal{B} = \mathcal{A} * \mathcal{X} + \mathcal{N}, \quad
\mathcal{A} \in \mathbb{R}^{n_1 \times n_2 \times n_3}, \quad
\mathcal{X} \in \mathbb{R}^{n_2 \times p \times n_3}, \quad
\mathcal{B} \in \mathbb{R}^{n_1 \times p \times n_3}.
\]
Experiments are performed on tensors $\mathcal{A}$ of varying sizes and tensors $\mathcal{X}$
drawn from different sources, including random tensors, video data, and hyperspectral images.
Unless stated otherwise, the entries of $\mathcal{A}$, $\mathcal{X}$ and the noise tensor $\mathcal{N}$ are sampled from the standard normal distribution, where the noise tensor $\mathcal{N}$ satisfies $\|\mathcal{N}\|_F = 10^{-2} \times \|\mathcal{A} * \mathcal{X}\|_F.$

All methods are initialized with $\mathcal{X}^{(0)} = \mathcal{O}$. The iteration is terminated when the number of iterations exceeds $10000$ or when the relative solution error (RSE) satisfies
\[
\operatorname{RSE} = \frac{\|\mathcal{X}^{(k)} - \mathcal{X}_{LS}\|_F^2}{\|\mathcal{X}_{LS}\|_F^2}
= \frac{\|\widehat{\mathcal{X}}^{(k)} - \widehat{\mathcal{X}}_{LS}\|_F^2}{\|\widehat{\mathcal{X}}_{LS}\|_F^2}
\le 10^{-6}.
\]

First, the tensor randomized average block coordinate descent (tRABCD) method is compared with the tensor randomized block extended Kaczmarz (tRBEK) method \cite{2024HZ}.
For the tensor randomized average block coordinate descent method, the lateral slice partition $\mathcal{T} = \{\tau_i\}_{i=1}^q$ of $\mathcal{A}$ is defined by
\[
\begin{aligned}
\tau_i &= \{(i-1)s+1, (i-1)s+2, \ldots, is\}, \quad i = 1, 2, \ldots, q-1, \\
\tau_q &= \{(q-1)s+1, (q-1)s+2, \ldots, n_2\}.
\end{aligned}
\]
The same lateral partition is adopted for the tensor randomized block extended Kaczmarz method, and its horizontal slice partition is defined as
\[
\begin{aligned}
I_i &= \{(i-1)s+1, (i-1)s+2, \ldots, is\}, \quad i = 1, 2, \ldots, q-1, \\
I_q &= \{(q-1)s+1, (q-1)s+2, \ldots, n_1\}.
\end{aligned}
\]

For a fair comparison, an  inverse-free format of the tensor randomized block extended Kaczmarz method is  employed, in which the step size parameters are adaptively chosen to minimize the approximation error at each iteration. The resulting  iterative formula is shown as follows:
\begin{equation}
\left\{
\begin{aligned}
\mathcal{Y}^{(k)} &= \mathcal{Y}^{(k-1)} - \alpha_{k1} \left( \mathcal{A}_{:,J_j,:} * \mathcal{A}_{:,J_j,:}^T * \mathcal{Y}^{(k-1)} \right), \\
\mathcal{X}^{(k)} &= \mathcal{X}^{(k-1)} + \alpha_{k2} \left( \mathcal{A}_{I_i,:,:}^T * \mathcal{H}^{(k)} \right),
\end{aligned}
\right.
\end{equation}
where
$\alpha_{k1} = \dfrac{\|\mathcal{A}_{:,J_j,:}^T * \mathcal{Y}^{(k-1)}\|_F^2}{\|\mathcal{A}_{:,J_j,:} * \mathcal{A}_{:,J_j,:}^T * \mathcal{Y}^{(k-1)}\|_F^2}$,
$\mathcal{H}^{(k)} = \mathcal{B}_{I_i,:,:} - \mathcal{Y}^{(k)}_{I_i,:,:} - \mathcal{A}_{I_i,:,:} * \mathcal{X}^{(k-1)}$,
and
$\alpha_{k2} = \dfrac{\|\mathcal{H}^{(k)}\|_F^2}{\|\mathcal{A}_{I_i,:,:}^T * \mathcal{H}^{(k)}\|_F^2}$.

\begin{example}\label{ex:4.1}
The optimal block size $s$ for the tensor randomized block extended Kaczmarz method and the tensor randomized average block coordinate descent method is investigated under the setting $n_1 = 500$, $n_2 = 100$, $n_3 = 10$ and $p = 30$.
\end{example}

\begin{figure}[!htbp]
    \centering
    \includegraphics[width=0.48\textwidth]{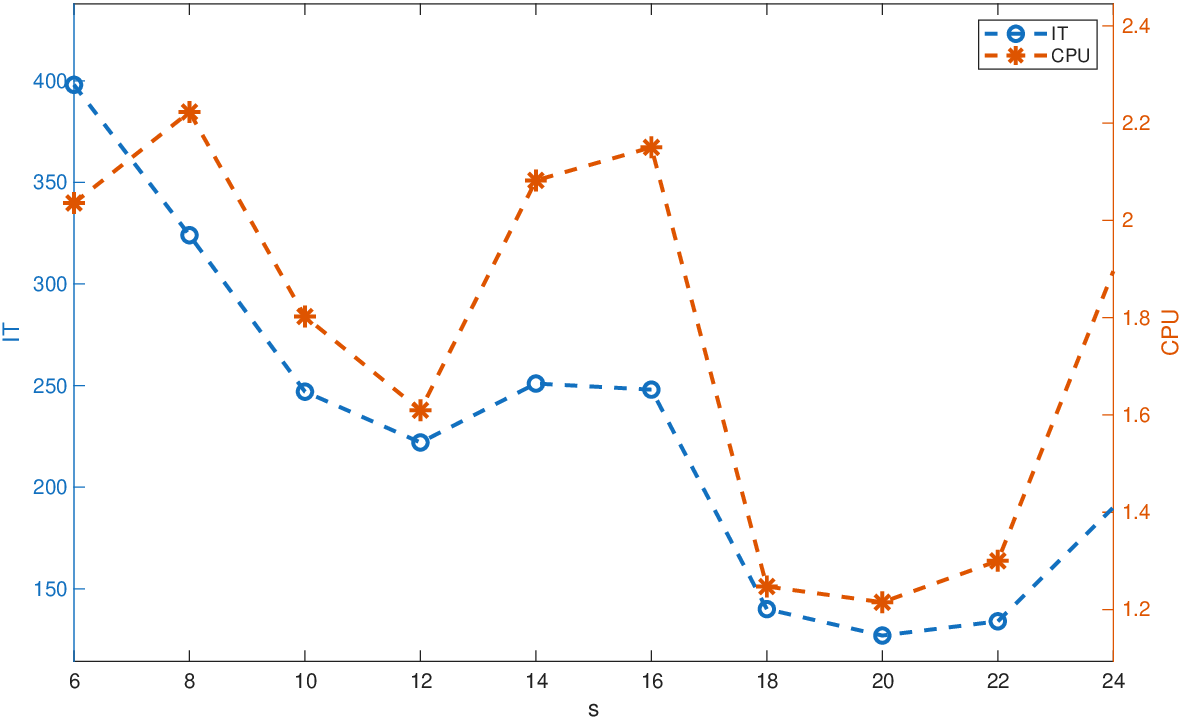}
    \hfill
    \includegraphics[width=0.48\textwidth]{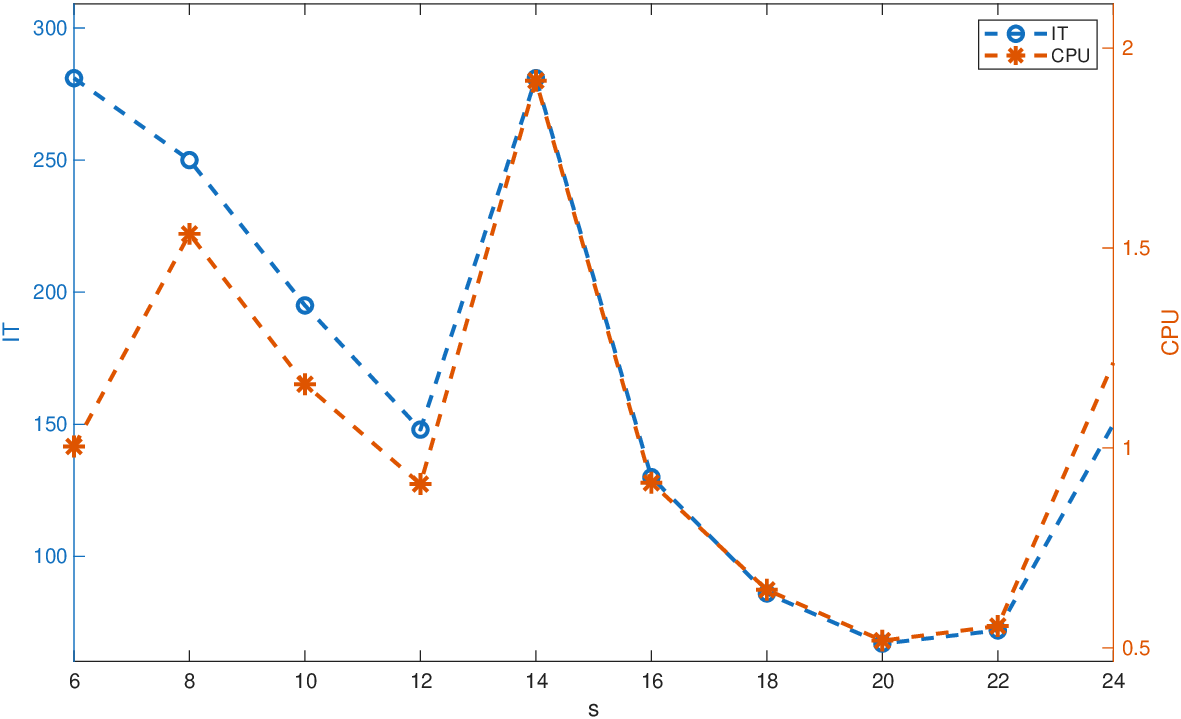}
    \caption{The curves of iteration counts and computational time of the tested methods versus the block size (left: tRBEK, right: tRABCD).}
    \label{fig:4.1}
\end{figure}

It is observed from Fig.~\ref{fig:4.1} that when $s = 20 = 0.2 n_2$, the tensor randomized average block coordinate descent method and the tensor randomized block extended Kaczmarz method achieve the fewest iterations and shortest CPU time.  Consequently, the block size is fixed as $s = 0.2n_2$ for all block-based methods in the subsequent experiments.

Next, the effectiveness of the tensor randomized average block coordinate descent method with heavy-ball momentum (tRABCD-HB) is evaluated, with the tensor randomized average block coordinate descent method serving as the baseline method. Both methods employ the block partition strategy described above. To quantify the acceleration effect introduced by the heavy-ball momentum, the speed-up ratio is defined as
\[
\operatorname{speed\text{-}up} = \frac{\operatorname{CPU}_{\text{tRABCD}}}{\operatorname{CPU}_{\text{tRABCD-HB}}}.
\]
\begin{example}\label{ex:6.0}
  The tensor randomized average block coordinate descent method with adaptive heavy-ball momentum, as defined in Eq.~\eqref{eqn:MR-alpha-beta}, is compared against its variants using fixed momentum parameters ($\beta = 0.1:0.05:0.35$) in this experiment. The parameters are set to $n_1 = 100$, $n_2 = 20$, $p=10$ and $n_3 = 10$.
\end{example}

\begin{figure}[!htbp]
    \centering
    \includegraphics[width=0.48\textwidth]{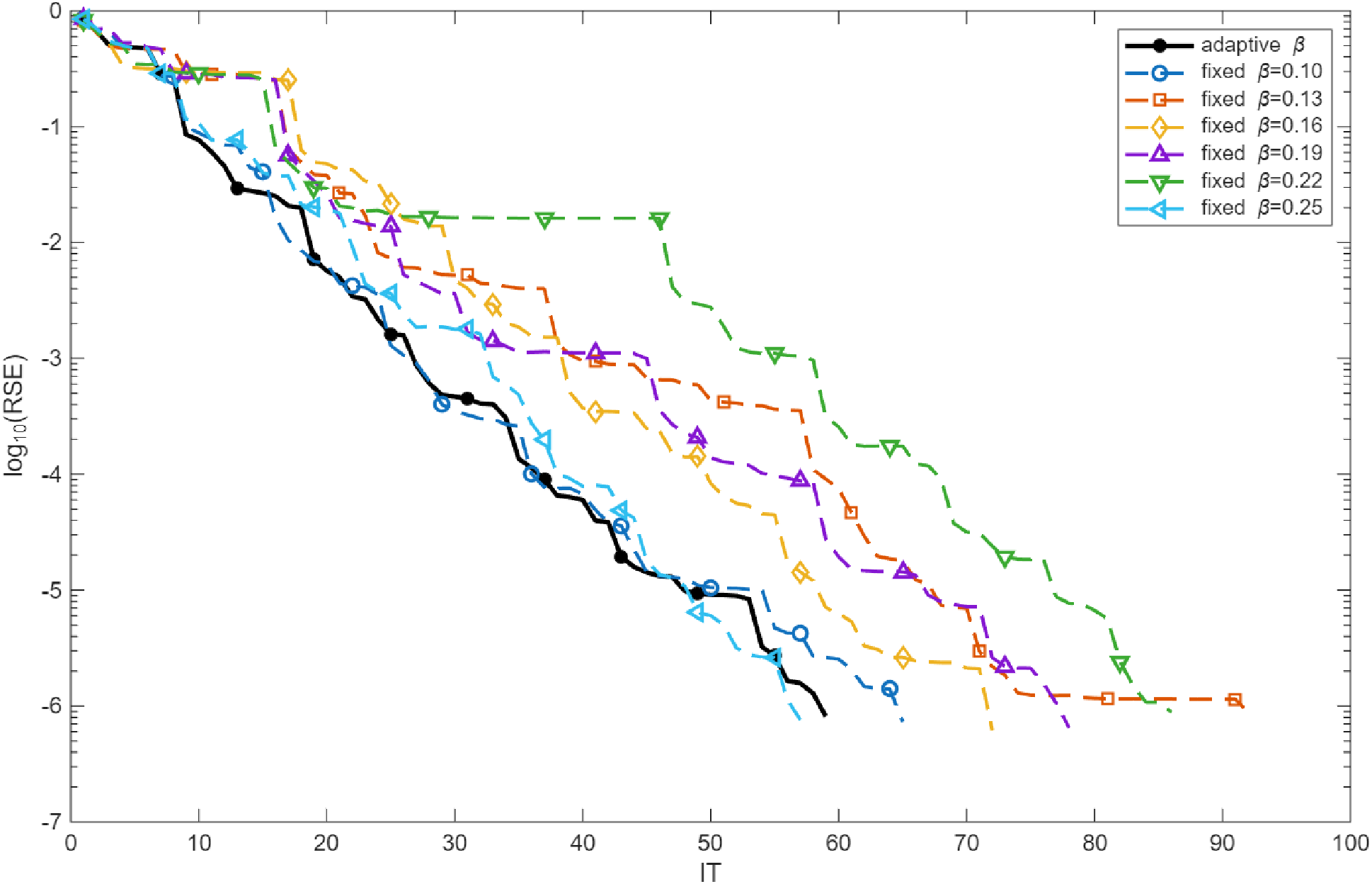}
    \hfill
    \includegraphics[width=0.48\textwidth]{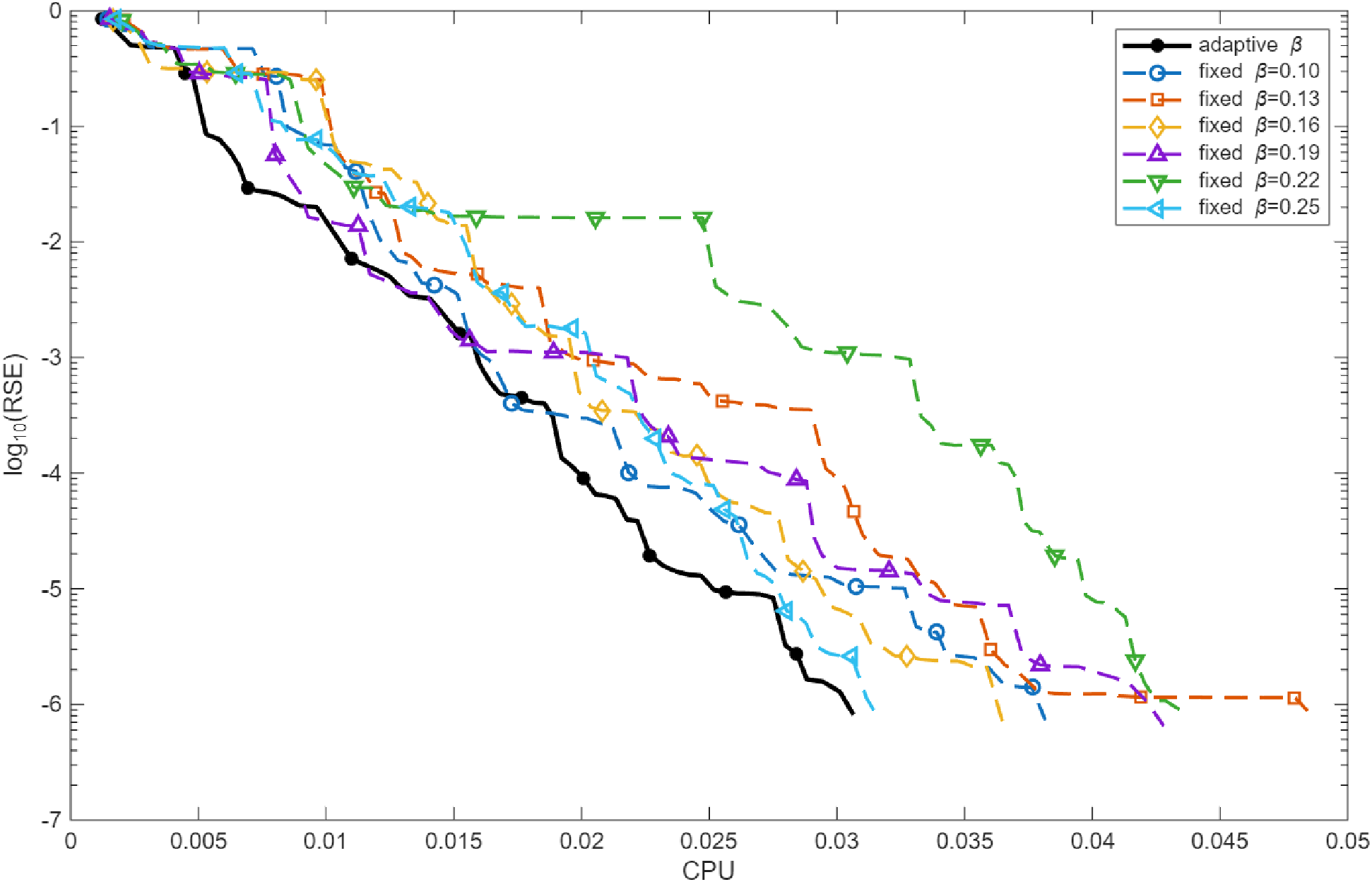}

    \caption{Relative solution error versus iteration count (left) and CPU time (right) under different values of $\beta$.}
    \label{fig:fix_beta}
\end{figure}

The convergence curves for Example~\ref{ex:6.0} under various values of $\beta$ are shown in Fig.~\ref{fig:fix_beta}. It is observed that the adaptive-$\beta$ strategy eliminates the need to manually preselect the momentum parameter, thereby saving the computational cost associated with parameter tuning. Furthermore,  this strategy achieves iteration counts and CPU time that are superior to or close to the best results among the fixed-$\beta$ cases.

\begin{example}\label{ex:6.4}
Video restoration experiments are conducted using the three methods. Specifically:

(1) $\mathcal{X} \in \mathbb{R}^{320\times 240\times 75}$ corresponds to the video
\texttt{v\_tennis\_01\_01.avi}, with the measurement tensor $\mathcal{A}\in \mathbb{R}^{480\times 320\times 75}$
generated from a standard normal distribution;

(2) $\mathcal{X}\in \mathbb{R}^{120\times 160\times 120}$ corresponds to the video
\texttt{traffic.avi} from Matlab, whose first forty frames are chosen. The corresponding measurement tensor $\mathcal{A}\in \mathbb{R}^{180\times 120\times 120}$ is
generated from a standard normal distribution.
\end{example}

In Example~\ref{ex:6.4}, the three methods are evaluated on video restoration problems. Peak signal-to-noise ratio (PSNR), structural similarity index (SSIM) and the minimum $\log_{10}(\mathrm{RSE})$ within a fixed number of iterations are adopted as evaluation standards.

\begin{figure}[!htbp]
    \centering
    \includegraphics[width=0.9\textwidth]{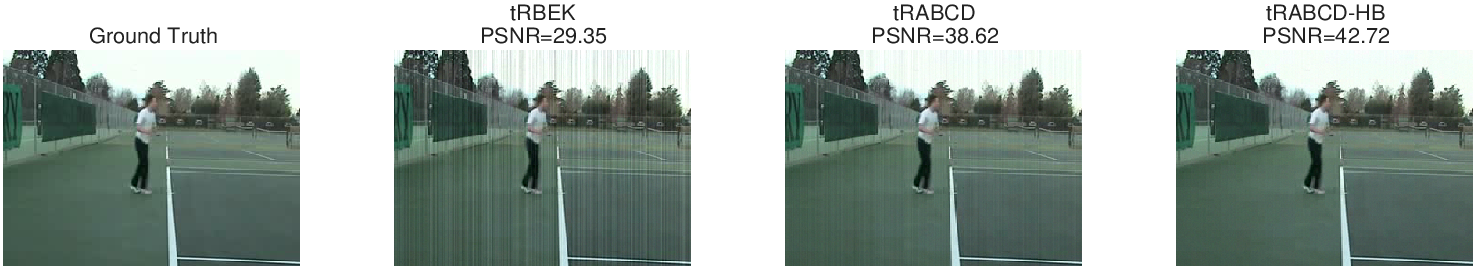}

    \vspace{1em}
	\includegraphics[width=0.9\textwidth]{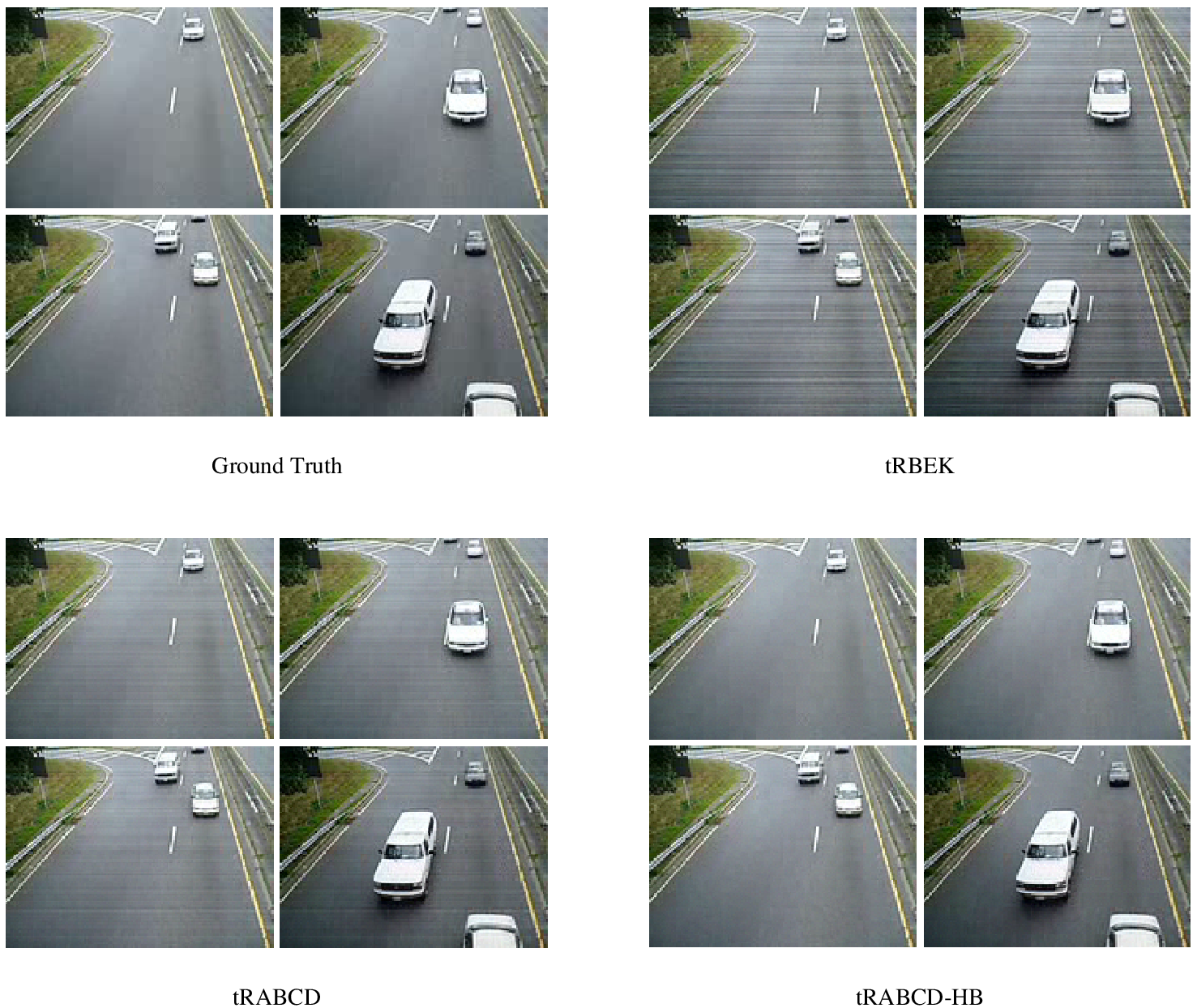}

    \caption{Original images, reconstructed images of video \texttt{v\_tennis\_01\_01.avi} (above) and video \texttt{traffic.avi} (below).}
    \label{fig:6.6}
\end{figure}

The visual recovery results for the first frame of the video in (1) and frames 10, 20, 30 and 40 of the video in (2) are shown in Fig.~\ref{fig:6.6}. It is seen from Fig.~\ref{fig:6.6} that the tensor randomized average block coordinate descent method with heavy-ball momentum produces visually superior reconstructed images compared with the other two methods under the same number of iterations.

\begin{figure}[!htbp]
    \centering
    \begin{minipage}{0.48\textwidth}
        \centering
        \includegraphics[width=\textwidth]{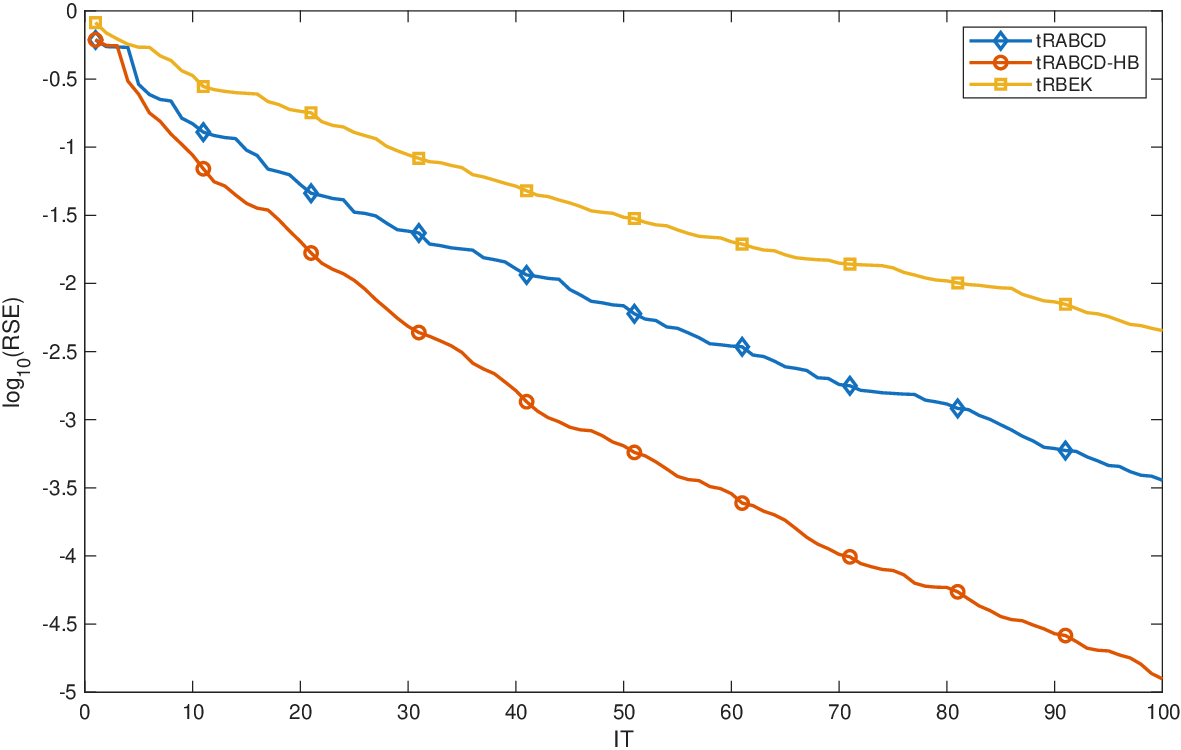}
    \end{minipage}\hfill
    \begin{minipage}{0.48\textwidth}
        \centering
        \includegraphics[width=\textwidth]{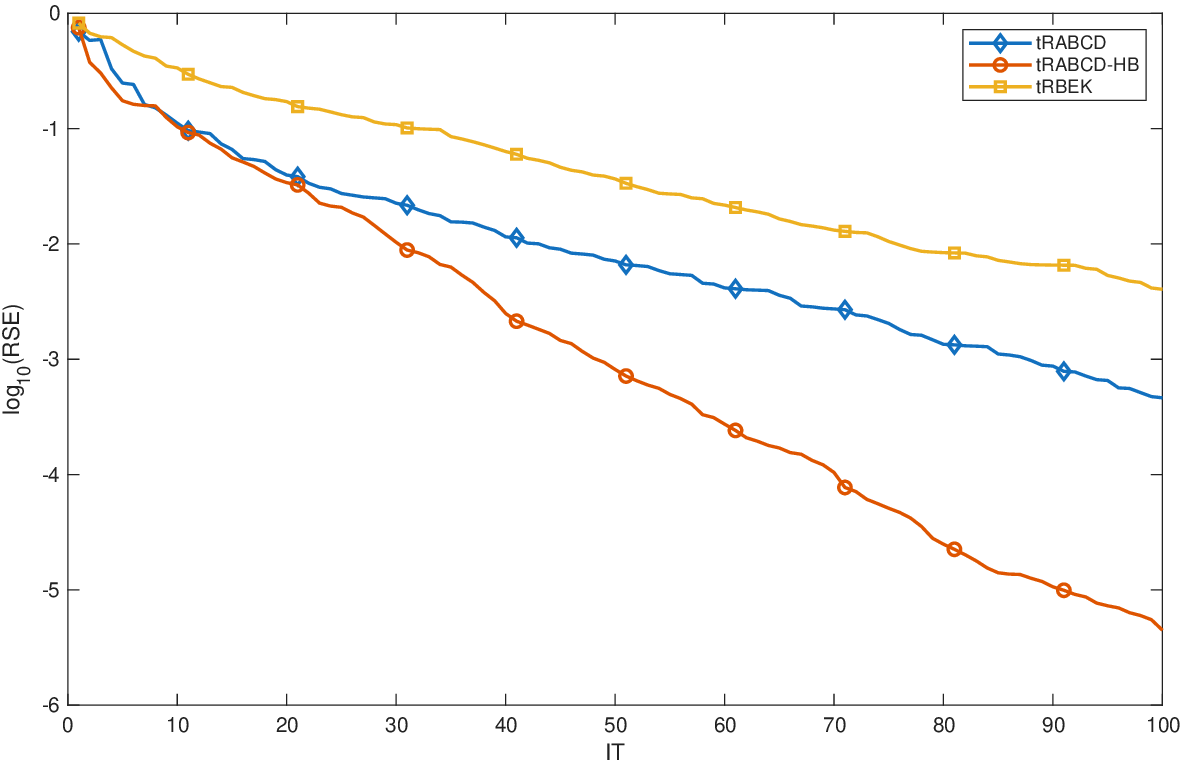}
    \end{minipage}
    \caption{Convergence curves for video \texttt{v\_tennis\_01\_01.avi} (left) and video \texttt{traffic.avi} (right).}
    \label{fig:6.7}
\end{figure}

\begin{table}[!htbp]
	\centering
	\caption{Numerical results for Example~\ref{ex:6.4}.}
	\label{tab:6.4}
	\begin{tabular}{l ccc ccc}
		\toprule
		\multirow{2}{*}{Method} & \multicolumn{3}{c}{\texttt{v\_tennis\_01\_01.avi}} & \multicolumn{3}{c}{\texttt{traffic.avi}} \\
		\cmidrule(lr){2-4} \cmidrule(lr){5-7}
		& PSNR & SSIM & $\log_{10}(\mathrm{RSE})$ & PSNR & SSIM & $\log_{10}(\mathrm{RSE})$ \\
		\midrule
		tRBEK     & 29.3509 & 0.8369 & -2.3463 & 29.3539 & 0.8231 & -2.3935 \\
		tRABCD    & 38.6214 & 0.9688 & -3.4441 & 37.3269 & 0.9591 & -3.3348 \\
		tRABCD-HB & \textbf{42.7222} & \textbf{0.9864} & \textbf{-4.9020} & \textbf{42.0061} & \textbf{0.9826} & \textbf{-5.3494} \\
		\bottomrule
	\end{tabular}
\end{table}

Numerical results of the three methods for the recovery of the two videos are presented in Fig.~\ref{fig:6.7} and Table~\ref{tab:6.4}. It is seen from Fig.~\ref{fig:6.7} that the convergence curves indicate the momentum mechanism helps maintain a consistent decrease in relative error during early iterations. Moreover, it is observed from Table~\ref{tab:6.4} that the tensor randomized average block coordinate descent method with heavy-ball momentum delivers the best overall performance for both test videos. Compared to the other two methods, it consistently achieves the highest PSNR and SSIM and the lowest $\log_{10}(\mathrm{RSE})$.

\section{Conclusion}
\label{sec:con}
A tensor randomized average block coordinate descent method with heavy-ball momentum is proposed for solving tensor least squares problem under $t$-product.
An adaptive step-size strategy without pseudoinverse computations is firstly introduced to construct an efficient baseline method, which is then enhanced by incorporating heavy-ball momentum. At each iteration, a two-dimensional subspace formed by the current randomized block direction and historical residual difference is exploited to determine the optimal step size and momentum parameters via orthogonal projection, yielding a tighter convergence bound with little additional cost. Theoretical analysis establishes the convergence rate and reveals an improved contraction effect over the baseline methods. Numerical experiments also confirm the effectiveness of the tensor randomized average block coordinate descent method with heavy-ball momentum, demonstrating its superior computational efficiency and stability in video restoration tasks.

\bibliographystyle{plain}
\bibliography{myref}

\end{document}